\definecolor{fgreen}{RGB}{44,144, 14}
\renewenvironment{proof}{{\bfseries Proof.}}{\qed}
\numberwithin{equation}{section} 
\newtheorem{theorem}{Theorem}[section] 
\newtheorem{proposition}[theorem]{Proposition} 
\newtheorem{lemma}[theorem]{Lemma} 
\theoremstyle{definition}
\newtheorem{definition}[theorem]{Definition} 
\newtheorem{remark}[theorem]{Remark} 
\newtheorem{example}[theorem]{Example}
\def\R{\mathbb R}
\def\s{\mathbb S}
\def\C{\mathbb C}
\def\H{\mathbb H}
\def\ib{\mathbf {i}}
\def\jb{\mathbf {j}}
\def\kb{\mathbf {k}}
\def\D{\mathbb D}
\def\R{\mathbb R}
\def\g{\mathcal G}
\newcommand{\GL}{\mathrm{GL}}
\def\R{\mathbb {R}}
\def\C{\mathbb {C}}
\def\H{\mathbb {H}}
\def\ib{\mathbf {i}}
\def\jb{\mathbf {j}}
\def\g{\mathfrak {g}}
\def\s{\mathfrak {s}}
\def\o{\mathfrak {o}}
\def\l{\mathfrak {l}}
\def\lto{\longrightarrow}
\def\GL{\rm GL}
\newcommand{\thmref}[1]{Theorem~\ref{#1}}
\newcommand{\lemref}[1]{Lemma~\ref{#1}}
\newcommand{\propref}[1]{Proposition~\ref{#1}}
\begin{document}

\title[Reversibility in $ {\rm GL}(n,\D) $ ]{Reversibility and Real Adjoint Orbits of Linear Maps} 
 \author[K. Gongopadhyay,  T. Lohan   and  C. Maity]{Krishnendu Gongopadhyay, Tejbir Lohan  and  Chandan Maity}

\address{Indian Institute of Science Education and Research (IISER) Mohali,
 Knowledge City,  Sector 81, S.A.S. Nagar 140306, Punjab, India}
\email{krishnendug@gmail.com, krishnendu@iisermohali.ac.in}

 \address{Indian Institute of Science Education and Research (IISER) Mohali,
 Knowledge City,  Sector 81, S.A.S. Nagar 140306, Punjab, India}
\email{tejbirlohan70@gmail.com,  ph18028@iisermohali.ac.in}

\address{Indian Institute of Science Education and Research (IISER) Mohali,
 Knowledge City,  Sector 81, S.A.S. Nagar 140306, Punjab, India}
 \email{maity.chandan1@gmail.com , cmaity@iisermohali.ac.in }

\makeatletter
\@namedef{subjclassname@2020}{\textup{2020} Mathematics Subject Classification}
\makeatother

 \subjclass[2020] {Primary 20E45; \, Secondary  15B33, 22E60.}
\keywords{General linear group,  reversibility, adjoint reality.}
\maketitle 

 {\centering\footnotesize \it Dedicated to the 80th birthday of Norbert A'Campo.\par}

\begin{abstract}
 We extend classical results on the classification of reversible  elements of the group $\mathrm{GL}(n, \C)$  (and  $\mathrm{GL}(n, \R)$)  to 
$\mathrm{GL}(n, \H)$ using an infinitesimal version of the classical reversibility, namely adjoint reality in the Lie algebra set-up.  
We also provide a new proof of such a classification for the general linear groups over $\R$ and $\C$. Further, we classify the real adjoint orbits in the Lie algebra $\mathfrak{gl}(n, \D)$ for $ \D=\R, \C$ or $\H $.
 \end{abstract}


\section{Introduction} \label{sec-intro-1}

Reversing and time-reversing symmetries are important classes of symmetries that appear in the natural science. Especially they arise in many physically motivated dynamical systems. There is an extensive literature discussing such symmetries in different areas of physics and dynamical systems, see e.g. \cite{l}, \cite{lr}. In a mathematical terminology,  the motions of a dynamical system may be associated with a group $G$, and such symmetries correspond to the ``reversible" and ``strongly reversible"  elements in $G$.   An element $g$ in a group $G$ is called  \emph{reversible}  if $g$ and $g^{-1}$ are conjugate in $G$ , that is, if there exists $h \in G$ such that $hgh^{-1} = g^{-1}$. An element $g$ in a group $G$ is  called \emph{strongly reversible}  if $g$ is conjugate to $g^{-1}$ by an involution (i.e., an element of order at most two) in $G$.    Equivalently,  strongly reversible elements are products of two involutions.   Some authors have called strongly reversible elements  ``bireflectional". 

\medskip Reversible maps have appeared from different perspectives in the literature, e.g. \cite{Ar}, \cite{De}, \cite{Se}, \cite{FS}.   From a group-theoretical perspective, a classical theorem of Frobenius and Schur asserts that the number of real-valued complex irreducible characters of a finite simple group $G$ is equal to the number of reversible conjugacy classes of $G$.  With this motivation,  many mathematicians have used the terminology  ``real" and  ``strongly real" elements.   A strongly reversible element is reversible, but the converse is not true in general.   It is a problem of potential interest to classify reversible and strongly reversible elements in different groups of interest.  In the theory of finite groups, the classification of such elements is relatively well-understood in the literature.   However, a complete classification of reversible classes is not available other than for a  few families of infinite groups.  Some of the infinite groups where it has been classified include compact Lie groups, real rank one classical groups, and isometry groups of Hermitian spaces, see e.g. \cite{FS, BG, GL}.

\medskip The idea of reversibility is apparent in the work of A'Campo \cite{ac} as well. A'Campo investigated the monodromy of real isolated singularities using the fact that the complex conjugation on complex space permutes the level sets of a real polynomial function and induces involutions on level sets corresponding to real values. In particular,  it was proved that the geometric monodromy is the composition of the involution induced by complex conjugation and another involution. In other words, the geometric monodromies are strongly reversible. A'Campo called the corresponding singularity  \emph{strongly invertible}.  Further,  it follows that any two geometric monodromies are ``linked'' by the involution that comes from complex conjugation.

\medskip Recently,  the concept of reversibility has been extended to semisimple Lie algebras using the adjoint representations of Lie groups in \cite{GM}. The infinitesimal notion that has been introduced for the Lie algebras is called \emph{adjoint reality}.   Understanding the adjoint orbits of a semisimple Lie group is an active research theme, see the survey \cite{CoM}. However, the exploration of adjoint reality properties has been the object of attention only very recently, cf. \cite{GM}, \cite{GLM}.   As an application of adjoint reality for nilpotent orbits in simple Lie algebras,   the reversible and strongly reversible unipotent elements in the classical simple Lie groups have been completely classified,  see  \cite{GM}.   

\medskip  Let $\D=\R, \C$ or $\H$. 
In this chapter, we revisit the classification of reversible elements in the general linear group ${\rm GL}(n,\D)$. The classification of reversible elements in ${\rm GL}(n,\D)$ and their equivalence with the strongly reversible elements are well known in the literature for $\D=\R$ or $\C$, cf. \cite{Wo}, \cite{FS}. Extending these results over the quaternions is not straightforward due to the non-commutativity of $ \H$.    We shall overcome such difficulties by approaching this problem using adjoint reality in the Lie algebra $\g \l(n,\D)$. We recall the notion of adjoint reality below.

\medskip Consider the adjoint action of the general linear group $G:= {\rm GL}(n, \D)$ on its Lie algebra $\g:=\mathfrak{gl}(n,\D)$. Recall that $\mathfrak{gl}(n,\D) \simeq \mathrm{M}(n,\D)$, the algebra of  $n \times n$ matrices over $\D$.  In this case,  the adjoint action is given by the conjugation. Consider the conjugacy action of ${\rm GL}(n, \D)$ on $\g\l(n,\D)$: ${\rm Ad}(g)X:=gXg^{-1}$.    An element $X\in \g$ is called {\it $ {\rm Ad}_G$-real}  if $-X =gXg^{-1} $ for some $g\in G$.  An  $ {\rm Ad}_G$-real element $X\in \g$ is called {\it strongly $ {\rm Ad}_G$-real } if $-X = \tau X \tau^{-1} $ for some involution (i.e.,  element of order at most two)  $\tau\in G$; see \cite[Definition 1.1]{GM}. Observe that if $X\in \g$ is  $ {\rm Ad}_G$-real, then $\exp (X)$ is reversible in $G$, but the converse may not be true. 

\medskip We classify the adjoint real elements in $\mathfrak{gl}(n,\D)$ and investigate their equivalence with the strongly adjoint real elements in $\mathfrak{gl}(n,\D)$; see Theorems \ref{thm-adjoint-real-gl(n,D)}, \ref{thm-str-reality-gl-R-C}, \ref{thm-strong-real-gl(n,H)}.  Using these ideas and applying some of these results,  we shall prove that some particular types of Jordan forms in ${\rm GL }(n,\D)$ are strongly reversible. This will be used to classify the reversible and strongly reversible elements in ${\rm GL}(n, \D)$.

\medskip This approach not only reduces the complexity of the computations but also gives a better understanding of reversibility in the group $ {\rm GL}(n, \D)$. This also provides a uniform treatment over $ \D=\R, \C$ or $ \H$.  We show by a counterexample in Example \ref{ex-real-not str real-gl(1,H)} that,  unlike the field case,  the notion of reversible and strongly reversible elements are not equivalent in ${\rm GL}(n, \H)$ in general.    We give a sufficient criterion for equivalence of the two notions in ${\rm GL}(n,\H)$,  see Theorem \ref{thm-strong-rev-GL(n,H)}.

\medskip The chapter is organized as follows. In \S \ref{sec-prel-2}, we fix some notation and recall some background related to Jordan canonical forms. In \S \ref{sec-reality-Jordan-3}, reversibility and adjoint reality of certain Jordan forms are described. In \S \ref{sec-gl(n,D)-4}, we deal with the adjoint reality in the Lie algebra $ \g\l(n,\D)$. We revisit the reversibility problem of $ {\rm GL}(n,\D)$ in \S \ref{sec-GL(n,D)-5} and provide a much simpler proof of earlier obtained results.

\subsection*{Acknowledgement} It has been a highly rewarding experience to know Norbert A'Campo. Interaction with him has always been rich in mathematical and non-mathematical ideas!  It is a pleasure to dedicate this small contribution to the volume in honor of Norbert.  We wish Norbert a very good health and a happy life ahead.  

\medskip
Gongopadhyay is partially supported by the SERB core research grant 
CRG/2022/003680. 
Lohan acknowledges support from the CSIR SRF grant, File No.\,: 09/947(0113)/2019-EMR-I. 
Maity is supported by an NBHM PDF during this work.

\section{Preliminaries} \label{sec-prel-2}

In this section, we will recall some necessary background.
Recall that $\H:=\R\, +\,\R\ib +\,\R\jb+\,\R\kb$ denotes  the division algebra  of Hamilton’s quaternions.  We consider $\H^n$ as a right $\H$-module. We refer to \cite{rodman} for a nice exposition on quaternion linear algebra.

\begin{definition}\label{def-eigen-M(n,H)}
	Let $A \in  \mathrm{M}(n,\H)$. A non-zero vector $v \in \H^n $ is said to be a  right eigenvector of $A$ corresponding to a  right eigenvalue  $\lambda \in \H $ if the equality $ A\lambda = v\lambda $ holds.
\end{definition}

Eigenvalues of $A\in  \mathrm{M}(n,\H)$ occur in similarity classes, i.e., if $v$ is an eigenvector corresponding to $\lambda$,  then $v \mu \in v \mathbb H$ is an eigenvector corresponding to  $\mu^{-1} \lambda \mu$. Each similarity class of eigenvalues contains a unique complex number with a non-negative imaginary part. Here,  instead of similarity classes of eigenvalues, we will consider the \textit{unique complex representative} with a non-negative imaginary part.

Let $ \psi \colon \C\lto {\rm M}(2,\R) $ be the embedding given by $ \psi (z): = \begin{pmatrix} {\rm Re}(z)   & {\rm Im}(z)  \\
	- {\rm Im}(z)  & {\rm Re}(z)   \\ 
\end{pmatrix}$. 
This induces the  embedding 
$	\Psi \colon  \mathrm{M}(n,\C)  \longrightarrow  \mathrm{M}(2n,\R) \hbox{ defined as} $
	\begin{equation}\label{eq-embedding-psi}  \quad \Psi((z_{i,j})_{n \times n} ) \,:=    \, \big(\psi { (z_{i,j}}) \big)_{2n \times 2n}    \,.
\end{equation}
It follows from the definition of  the exponential map $ \exp:  \mathrm{M}(n,\C)  \longrightarrow  \mathrm{GL}(n,\C) $ that 
\begin{equation}\label{eq1-rel-embedding-Theta}
	\Psi(\exp (X))  \,=\, \exp(\Psi(X))   \quad \forall \   X \in {\rm M}(n, \C). 
\end{equation}

\begin{definition}[cf.~{\cite[p.  94]{rodman}}] \label{def-jordan}
	A {\it Jordan block} $\mathrm{J}(\lambda,m)$ is an $m \times m$ matrix with $ \lambda \in \D$ on the diagonal entries,  $1$ on all of the super-diagonal entries and zero elsewhere. We will refer to a block diagonal matrix where each block is a Jordan block as a \textit{Jordan form}. 
\end{definition}

We also consider the following block matrix as a Jordan form over $ \R $,  cf.~{\cite[p.  364]{GLR}}, which corresponds to the case when the eigenvalues of a matrix over $\R$ belong to $\C \setminus \R$. Recall the embedding $ \Psi $ as in \eqref{eq-embedding-psi}. Let  $K :=\Psi (\mu + \ib \nu) =   \begin{pmatrix}
	\mu   & \nu   \\
	-\nu &   \mu   \\
\end{pmatrix}  \in \mathrm{M}(2,\R)$,  where $\mu$, $\nu$ are real numbers with $\nu >0$.  Then define 
\begin{equation}\label{equ-real-Jordan block}
	\mathrm{J}_{\R}(\mu \pm \ib \nu, \,  2n )  := \Psi (\mathrm{J}(\mu + \ib \nu, \,  n )   ) \,=\,  \begin{pmatrix}
		K   & \mathrm{I}_2 &  & &    &    &  \\
		&  K   & \mathrm{I}_2 &  &   &    &  \\
		& &     &  \ddots &   &   \\
		&& & &    &   K   & \mathrm{I}_2 \\
		&  &&  &   &   &   K   \\
	\end{pmatrix} \, \in \mathrm{M}(2n,\R),
\end{equation}
where $ \mathrm{I}_2 $ denotes the $ 2\times 2$ identity matrix; see  {\cite[Theorem 15.1.1]{rodman}},  {\cite[Chapter 12]{GLR}}.  Further,  we also define 
\begin{equation}\label{eq-relation-real-jordan-block}
	\mathrm{J}_{\R}(\mu \mp \ib \nu, \,  2n ) := \Psi (\mathrm{J}(\mu - \ib \nu, \,  n )   )   =  \sigma  \,  \Big(  \Psi (\mathrm{J}(\mu + \ib \nu, \,  n )   ) \Big) \, \sigma^{-1},  
\end{equation}
where $  \sigma =   \textnormal{diag} (1,  -1,  1,  -1, \dots , (-1)^{2n-1} \ )_{2n \times 2n }$.

\begin{remark}
We will follow the notation  $ \mathrm{J}_{\R}(\mu \pm \ib \nu, \,  n ) $ and $ \mathrm{J}_{\R}(\mu \mp \ib \nu, \,  n ) $ as defined in \eqref{equ-real-Jordan block} and \eqref{eq-relation-real-jordan-block} throughout this chapter. Note that  $ \{  \mathrm{J}_{\R}(\mu \pm \ib \nu, \,  n ) \} $ is a singleton  by the above  definition.
\end{remark}

Next we recall the Jordan form in $\mathrm{M}(n,\D)$,  see {\cite[Theorem 15.1.1,  Theorem 5.5.3]{rodman}}.

\begin{lemma}[{Jordan  form in $ \mathrm{M}(n,\D)$, cf.~	 \cite{rodman}}] \label{lem-Jordan-M(n,D)}
	For every $A \in  \mathrm{M}(n,\D)$,  there is an invertible matrix $S \in  \mathrm{GL}(n,\D)$ such that $SAS^{-1}$ has the following  form: 
	\begin{enumerate}
		\item For $\D= \R$,	$	SAS^{-1} =  \mathrm{J}(\lambda_1,  \,  m_1) \oplus  \cdots \oplus  \mathrm{J}(\lambda_k, \, m_k)  $
		\begin{equation}\label{equ-Jordan-M(n,R)}
			\bigoplus  \mathrm{J}_{\R}( \mu_1 \pm \ib \nu_1, \,  2 \ell_1 )\oplus \cdots \oplus \mathrm{J}_{\R}(\mu_q \pm \ib \nu_q,  \, 2 \ell_q ), 
		\end{equation}
		where $ \lambda_1,  \dots,  \lambda_k  $; $ \mu_1,  \dots,  \mu_q  $ ; $ \nu_1,  \dots,  \nu_q  $  are  (not necessarily distinct) real numbers and $ \nu_1,  \dots,  \nu_q  $ are positive.  
		\item For $\D= \C$,	
		\begin{equation} \label{equ-Jordan-M(n,C)}
			SAS^{-1} =  \mathrm{J}(\lambda_1,  \,  m_1) \oplus  \cdots \oplus  \mathrm{J}(\lambda_k, \, m_k),
		\end{equation}
		where  $ \lambda_1,  \dots,  \lambda_k $ are  (not necessarily distinct) complex numbers.
		\item For $\D= \H$,
		\begin{equation} \label{equ-Jordan-M(n,H)}
			SAS^{-1} =  \mathrm{J}(\lambda_1, m_1) \oplus  \cdots \oplus  \mathrm{J}(\lambda_k, m_k),
		\end{equation}
		where $ \lambda_1,  \dots,  \lambda_k $ are  (not necessarily distinct) complex numbers and have non-negative imaginary parts.  
	\end{enumerate}
	The forms  (\ref{equ-Jordan-M(n,R)}),   (\ref{equ-Jordan-M(n,C)}) and  (\ref{equ-Jordan-M(n,H)}) are uniquely determined by $A$ up to a permutation of Jordan blocks.
\end{lemma}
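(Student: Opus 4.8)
The plan is to prove the complex case first and then obtain the real and quaternionic forms by descent through the standard matrix representations, closing with a uniqueness argument based on conjugation-invariant rank data. For $\D=\C$ I would view $\C^n$ as a finitely generated torsion module over the principal ideal domain $\C[t]$, with $t$ acting as $A$; the structure theorem decomposes $\C^n$ into cyclic summands $\C[t]/\big((t-\lambda)^{m}\big)$, and on each such summand a suitably ordered Jordan chain realizes $A$ as the block $\mathrm{J}(\lambda,m)$. Equivalently, one splits $\C^n=\bigoplus_\lambda \ker(A-\lambda\,\mathrm{I})^{n}$ into generalized eigenspaces, notes that $N:=A-\lambda\,\mathrm{I}$ is nilpotent on each summand, and chooses a basis adapted to the filtration by the powers of $N$. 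This yields \eqref{equ-Jordan-M(n,C)}.

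For $\D=\R$ I would complexify, setting $A_\C\in\mathrm{M}(n,\C)$, and observe that the conjugation $\sigma$ of $\C^n=\R^n\otimes_\R\C$ fixing $\R^n$ commutes with $A_\C$. Hence $\sigma$ permutes the complex Jordan blocks of $A_\C$, carrying the block for $\lambda$ to a block of the same size for $\bar\lambda$. Real eigenvalues contribute ordinary blocks $\mathrm{J}(\lambda,m)$, while each conjugate pair $\mu\pm\ib\nu$ with $\nu>0$ contributes paired blocks $\mathrm{J}(\mu+\ib\nu,\ell)$ and $\mathrm{J}(\mu-\ib\nu,\ell)$; on the $\sigma$-stable real $2\ell$-dimensional subspace they span one extracts a real basis in which $A$ acts as $\mathrm{J}_{\R}(\mu\pm\ib\nu,2\ell)=\Psi(\mathrm{J}(\mu+\ib\nu,\ell))$, via the embedding \eqref{eq-embedding-psi} and the block \eqref{equ-real-Jordan block}. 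Collecting the summands gives \eqref{equ-Jordan-M(n,R)}.

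For $\D=\H$ I would pass to the complex representation. Viewing the right module $\H^n$ as a $2n$-dimensional complex vector space through a fixed copy $\C\hookrightarrow\H$, left multiplication by $A$ becomes a matrix $A_\C\in\mathrm{M}(2n,\C)$, and right multiplication by $\jb$ defines a conjugate-linear map $J$ with $J^2=-\mathrm{I}$ and $J A_\C=A_\C J$. As in the real case, $J$ forces the complex spectrum of $A_\C$ to be invariant under conjugation and pairs the Jordan blocks of $A_\C$ for $\lambda$ and $\bar\lambda$ with equal sizes. Descending this paired structure back to $\H$ and selecting from each similarity class the unique representative with non-negative imaginary part produces \eqref{equ-Jordan-M(n,H)}.

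Finally, uniqueness in all three cases reduces to the complex case, where the number of blocks $\mathrm{J}(\lambda,m)$ of each size is pinned down by the conjugation-invariant quantities $\mathrm{rank}(A-\lambda\,\mathrm{I})^{m-1}-2\,\mathrm{rank}(A-\lambda\,\mathrm{I})^{m}+\mathrm{rank}(A-\lambda\,\mathrm{I})^{m+1}$; for $\R$ and $\H$ one applies this to $A_\C$ together with the conjugation symmetry just described. The main obstacle is the quaternionic case: because eigenvalues occur as similarity classes rather than individual scalars and $\H$ is non-commutative, the delicate point is to verify that the conjugate-linear symmetry $J$ pairs the complex blocks exactly and that the resulting paired structure descends to a well-defined quaternionic Jordan form with the prescribed normalization of eigenvalues.
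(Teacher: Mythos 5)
This lemma is quoted in the paper as a known result, cited to Rodman \cite{rodman} (Theorems 5.5.3 and 15.1.1) and to Gohberg--Lancaster--Rodman \cite{GLR}; the paper itself contains no proof, so there is nothing internal to compare against. Your outline reconstructs essentially the standard textbook argument used in those references: the $\C[t]$-module (or generalized-eigenspace) proof over $\C$, descent to $\R$ via the conjugation commuting with $A_\C$, descent to $\H$ via the conjugate-linear map $J$ (right multiplication by $\jb$) with $J^2=-\mathrm{I}$ on $\H^n\cong\C^{2n}$, and uniqueness from the rank invariants $\mathrm{rank}(A-\lambda\,\mathrm{I})^{m-1}-2\,\mathrm{rank}(A-\lambda\,\mathrm{I})^{m}+\mathrm{rank}(A-\lambda\,\mathrm{I})^{m+1}$. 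All of this is sound. The one point you flag but do not close is genuinely the crux of the quaternionic case, and it splits in two: for $\lambda\neq\bar\lambda$ the map $J$ carries the generalized eigenspace of $\lambda$ bijectively onto that of $\bar\lambda$ and exact pairing of block sizes is immediate; but for real $\lambda$ one must show the blocks of $A_\C$ of each size occur with \emph{even} multiplicity, which follows because $J$ preserves that generalized eigenspace and commutes with $N=A_\C-\lambda\,\mathrm{I}$, so each $\ker N^{k}$ is $J$-stable and hence inherits a quaternionic structure, forcing all the relevant dimensions to be even. You would also need the converse construction (assembling a quaternionic Jordan basis from a $J$-paired family of complex Jordan chains) to see that the paired complex data actually realizes the form \eqref{equ-Jordan-M(n,H)} with the normalized representatives; this is routine but should be stated, and with it your plan yields a complete proof.
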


\section{Strong Reversibility of Jordan forms}\label{sec-reality-Jordan-3}
In this section,  we  investigate the adjoint reality in $\mathfrak{gl}(n,\D)$ and reversibility in $\mathrm{GL}(n,\D)$  for certain types of Jordan forms. 
\subsection{Strong adjoint reality of Jordan forms in  $ \mathfrak{gl}(n,\D)$ }\label{subsec-str-ad-reality-Jordan}

Here, we will consider some particular types of  Jordan forms in  $\mathfrak{gl}(n,\D)$ and show that they are strongly $ {\rm Ad}_{{\rm GL }(n,\D)} $-real by explicitly constructing a suitable reversing involution.

\begin{lemma}\label{lem-reverser-lie-nilpotent}
	
Let  $ \D = \R, \C$ or $\H$, and  $X:= \mathrm{J}(0,  \,  n) $ be the nilpotent element in $ \mathfrak{gl}(n,\D) $.  Then $X$ is strongly ${\rm Ad}_{{\rm GL }(n,\D)} $-real.
\end{lemma}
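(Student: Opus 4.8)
The plan is to exhibit, uniformly over $\D = \R, \C, \H$, a single explicit involution that conjugates $X$ to $-X$, and then to verify the reversing identity by a short entrywise computation. The natural candidate is the diagonal sign matrix of the same alternating-sign shape as the matrix $\sigma$ appearing in \eqref{eq-relation-real-jordan-block}, namely
\[
\tau := \textnormal{diag}\big(1, -1, 1, -1, \dots, (-1)^{n-1}\big) \in \mathrm{GL}(n,\D).
\]
First I would record that $\tau^2 = \mathrm{I}_n$, so that $\tau$ is an involution and $\tau^{-1} = \tau$; in particular $\tau$ is invertible with all entries in $\R \subseteq Z(\D)$.

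Next I would compute $\tau X \tau^{-1}$. Writing $\tau = \textnormal{diag}(\epsilon_1, \dots, \epsilon_n)$ with $\epsilon_i = (-1)^{i-1}$, conjugation by a diagonal matrix scales the $(i,j)$ entry of any matrix by $\epsilon_i \epsilon_j^{-1}$. Since $X = \mathrm{J}(0,n)$ has its only nonzero entries equal to $1$ on the superdiagonal, i.e.\ in positions $(i, i+1)$, the matrix $\tau X \tau^{-1}$ again has nonzero entries only on the superdiagonal, each scaled by $\epsilon_i \epsilon_{i+1}^{-1} = (-1)^{i-1}(-1)^{i} = -1$. Hence $\tau X \tau^{-1} = -X$, which is precisely the assertion that $X$ is strongly $\mathrm{Ad}_{\mathrm{GL}(n,\D)}$-real.

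The computation involves no genuine obstacle; the only point requiring attention is the noncommutativity of $\H$, which in principle enters whenever one multiplies quaternionic matrices. Here it is harmless precisely because the entries $\epsilon_i = \pm 1$ of $\tau$ are central: the identity $(\tau A \tau^{-1})_{i,j} = \epsilon_i\, A_{i,j}\, \epsilon_j^{-1}$ holds verbatim over $\H$, so the same $\tau$ settles all three cases without modification. This centrality, and hence the uniformity over $\D = \R, \C, \H$, is the one feature of the argument worth flagging explicitly.
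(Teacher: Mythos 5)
Your proposal is correct and uses exactly the same reversing involution as the paper, namely $\tau = \textnormal{diag}\big(1,-1,1,\dots,(-1)^{n-1}\big)$; the paper simply asserts $\tau X\tau^{-1}=-X$ while you verify it entrywise. Your explicit remark that the entries $\pm 1$ are central in $\H$ is a worthwhile clarification but does not change the argument.
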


\begin{proof} 
Let  $g :=   \textnormal{diag} (1,  -1,  1,  -1, \dots , (-1)^{n-1} \ )_{n \times n }. $ 
Then $g$ is an involution in ${\rm GL }(n,\D)$ such that $ gXg^{-1} = -X$.  This completes the proof.
\end{proof}

\begin{lemma}\label{lem-reverser-lie-pair-C}
Let  $X:=   \mathrm{J}(\lambda, n) \,  \oplus  \, \mathrm{J}(-\lambda, n) $ be the Jordan form in  $ \mathfrak{gl}(2n,\D)  $,   where $ \lambda \in \D \setminus \{0\},  $ for $ \D = \R$ or $\C, $ and for $ \D= \H$,  $\lambda  \in \C \setminus \{0\} $  with non-negative imaginary part  such that the real part of $ \lambda \neq 0$.  Then $X$ is  strongly $ {\rm Ad}_{{\rm GL }(2n,\D)} $-real.
\end{lemma}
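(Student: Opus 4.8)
The plan is to exhibit an explicit involution $\tau \in \mathrm{GL}(2n,\D)$ with $\tau X \tau^{-1} = -X$, built by combining two ingredients: the sign-flip conjugation already used in Lemma \ref{lem-reverser-lie-nilpotent}, and a swap of the two Jordan blocks.

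First I would record the elementary conjugation identity underlying everything. Let $g := \textnormal{diag}(1,-1,1,\dots,(-1)^{n-1}) \in \mathrm{GL}(n,\D)$, the same involution appearing in Lemma \ref{lem-reverser-lie-nilpotent}, so that $g^2 = \mathrm{I}_n$. A direct entrywise check shows that conjugation by $g$ preserves every diagonal entry of a Jordan block but reverses the sign of each super-diagonal entry; hence $g\,\mathrm{J}(\mu,n)\,g^{-1} = -\mathrm{J}(-\mu,n)$ for every $\mu \in \D$. Applying this with $\mu = \lambda$ and with $\mu = -\lambda$ gives the two relations $g\,\mathrm{J}(\lambda,n)\,g^{-1} = -\mathrm{J}(-\lambda,n)$ and $g\,\mathrm{J}(-\lambda,n)\,g^{-1} = -\mathrm{J}(\lambda,n)$, which are exactly what is needed to reverse each block after the swap.

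Next I would define the candidate reverser as the block anti-diagonal matrix $\tau := \begin{pmatrix} 0 & g \\ g & 0 \end{pmatrix} \in \mathrm{GL}(2n,\D)$. Since $g^2 = \mathrm{I}_n$, one computes $\tau^2 = \mathrm{I}_{2n}$, so $\tau$ is an involution and $\tau^{-1} = \tau$. Writing $X = \mathrm{J}(\lambda,n) \oplus \mathrm{J}(-\lambda,n)$ in the corresponding $2 \times 2$ block form and carrying out the block multiplication $\tau X \tau$ produces the diagonal blocks $g\,\mathrm{J}(-\lambda,n)\,g^{-1}$ and $g\,\mathrm{J}(\lambda,n)\,g^{-1}$; by the two relations above these equal $-\mathrm{J}(\lambda,n)$ and $-\mathrm{J}(-\lambda,n)$ respectively, so $\tau X \tau^{-1} = -X$. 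This shows that $X$ is strongly $\mathrm{Ad}_{\mathrm{GL}(2n,\D)}$-real.

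There is no genuine obstacle here; the only points requiring a little care are that the anti-diagonal placement of $g$ simultaneously performs the block interchange (which matches $-X$ to $X$ after reordering the two summands) and the super-diagonal sign correction, and that $g$ has real entries, so the identity $g\,\mathrm{J}(\mu,n)\,g^{-1} = -\mathrm{J}(-\mu,n)$ holds verbatim over $\H$ despite non-commutativity. The hypotheses on $\lambda$ (namely $\lambda \neq 0$, and for $\D = \H$ that $\lambda$ is the complex representative with non-negative imaginary part and nonzero real part) are not used in the construction itself; they serve only to guarantee that $\mathrm{J}(\lambda,n)$ and $\mathrm{J}(-\lambda,n)$ are distinct canonical blocks, so that $X$ genuinely has the paired form addressed by the lemma.
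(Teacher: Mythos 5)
Your proposal is correct and is essentially the paper's own proof: the paper likewise takes the alternating sign matrix $\textnormal{diag}(1,-1,\dots,(-1)^{n-1})$ (calling it $\tau$), observes the relation $\mathrm{J}(\lambda,n)\,\tau = -\tau\,\mathrm{J}(-\lambda,n)$, and conjugates by the same block anti-diagonal involution to get $gXg^{-1}=-X$. Your additional remarks --- that the identity survives over $\H$ because the sign matrix has real (hence central) entries, and that the hypotheses on $\lambda$ are not needed for the construction itself --- are accurate refinements of the same argument.
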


\begin{proof} 
Write $X= \begin{pmatrix}
		\mathrm{J}(\lambda, n) &    \\
		&   \mathrm{J}(-\lambda, n)   \\
	\end{pmatrix}$. 
Let  $\tau := \textnormal{diag} (1,  -1,  1,  -1, \dots , (-1)^{n-1} \ )_{n \times n }$.  Note that $  \mathrm{J}(\lambda, n) \, \tau=-\tau \,  \mathrm{J}(-\lambda, n)$.  Consider $g =  \begin{pmatrix}
		&  \tau  \\
		\tau &     \\
	\end{pmatrix} \in  \mathrm{GL}(2n,\D) $.  Then $g$ is an involution in ${\rm GL }(2n,\D)$ such that $ gXg^{-1} = -X$.  This completes the proof.
\end{proof}

\begin{lemma} \label{lem-reverser-lie-pair-H}
Let $X:= \mathrm{J}(\mu\ib,   \,  n) \, \oplus \,  \mathrm{J}(\mu \ib,  \,  n)$ be the Jordan form in  $  \mathfrak{gl}(2n,\H)$,  where $\mu \in \R, \mu >0$.  Then $X$ is strongly ${\rm Ad}_{{\rm GL }(2n,\H)} $-real.
\end{lemma}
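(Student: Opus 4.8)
The plan is to reduce the claim to a single Jordan block and then exploit the two-fold multiplicity to upgrade an order-four reverser into an honest involution. Writing $A := \mathrm{J}(\mu\ib, n)$, so that $X = A \oplus A$, I first look for a single matrix $\tau_0 \in {\rm GL}(n,\H)$ reversing $A$, i.e. $\tau_0 A \tau_0^{-1} = -A$. The natural candidate combines two effects: the quaternionic scalar $\jb\,\mathrm{I}_n$, which conjugates the diagonal part via the identity $\jb\,(\mu\ib)\,\jb^{-1} = -\mu\ib$ (using $\jb\ib\jb^{-1} = -\ib$ together with the fact that $\mu \in \R$ is central), while fixing the superdiagonal entries $1$; and the sign matrix $g := \textnormal{diag}(1, -1, \dots, (-1)^{n-1})$ from \lemref{lem-reverser-lie-nilpotent}, whose conjugation flips the sign of the superdiagonal. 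Since $g$ has real entries it commutes with $\jb\,\mathrm{I}_n$, and a direct check gives
\[
	\tau_0 := (\jb\, \mathrm{I}_n)\, g \in {\rm GL}(n,\H), \qquad \tau_0 \, A \, \tau_0^{-1} = -A.
\]

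The main obstacle is that $\tau_0$ is \emph{not} an involution: because $\jb^2 = -1$, one computes $\tau_0^2 = -\mathrm{I}_n$, so $\tau_0$ has order four. This is not a defect of the particular choice of $\tau_0$; for a single block there is genuinely no reversing involution (for $n=1$ the only square roots of $1$ in $\H \setminus \{0\}$ are $\pm 1$, which are central and cannot conjugate $\mu\ib$ to $-\mu\ib$), which is precisely the phenomenon recorded in Example \ref{ex-real-not str real-gl(1,H)}. Consequently I cannot simply place $\tau_0$ on the anti-diagonal as in \lemref{lem-reverser-lie-pair-C}, since that matrix would again square to $-\mathrm{I}_{2n}$.

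The resolution is to use the two blocks to absorb the extra sign by placing $\tau_0$ and $\tau_0^{-1}$ asymmetrically. I set
\[
	\tau := \begin{pmatrix} 0 & \tau_0 \\ \tau_0^{-1} & 0 \end{pmatrix} \in {\rm GL}(2n, \H).
\]
Then the diagonal blocks of $\tau^2$ are $\tau_0 \tau_0^{-1}$ and $\tau_0^{-1}\tau_0$, so $\tau^2 = \mathrm{I}_{2n}$ and $\tau$ is an involution. It remains to verify the reversing relation: a block multiplication gives $\tau X \tau^{-1} = (\tau_0 A \tau_0^{-1}) \oplus (\tau_0^{-1} A \tau_0)$, and from $\tau_0 A \tau_0^{-1} = -A$ one deduces $\tau_0^{-1} A \tau_0 = -A$ as well, whence $\tau X \tau^{-1} = (-A) \oplus (-A) = -X$. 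This exhibits $\tau$ as a reversing involution and shows that $X$ is strongly ${\rm Ad}_{{\rm GL}(2n,\H)}$-real. The only steps left to fill in are the two quaternion identities and the routine block-matrix products.
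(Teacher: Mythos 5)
Your proposal is correct and is essentially the paper's own proof: your $\tau_0 = (\jb\,\mathrm{I}_n)\,g$ is exactly the paper's reverser $\tau = \mathrm{diag}(\jb, -\jb, \dots, (-1)^{n-1}\jb)$, and since $\tau_0^2 = -\mathrm{I}_n$ gives $\tau_0^{-1} = -\tau_0$, your asymmetric involution $\left(\begin{smallmatrix} & \tau_0 \\ \tau_0^{-1} & \end{smallmatrix}\right)$ coincides with the paper's $\left(\begin{smallmatrix} & \tau \\ -\tau & \end{smallmatrix}\right)$. The only difference is presentational: you motivate the sign twist via the order-four obstruction (consistent with Example \ref{ex-real-not str real-gl(1,H)}), which the paper leaves implicit.
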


\begin{proof} 
Let $ \tau := \mathrm{diag} (\jb,-\jb,\jb,-\jb,\dots,(-1)^{n-1}\jb)_{n \times n }$. 
Then $\tau^2 = - \mathrm{I}_n$  and $  \tau \,  \mathrm{J}(\mu\ib,   \,  n) \, \tau^{-1} $
$= - \mathrm{J}(\mu\ib,   \,  n) $.   Consider the involution $ g = \begin{pmatrix}
		& \tau \\
		-\tau& \\ 
	\end{pmatrix}$ in ${\rm GL }(2n,\H)$.  
Then $g$ is an involution in ${\rm GL }(2n,\H)$ such that $ gXg^{-1} = -X$.  This proves the lemma.
\end{proof}

Recall that the matrix $\mathrm{J}_{\R}( \mu \pm \ib \nu, \,  2n ) $ is defined   in  \eqref{equ-real-Jordan block}.

\begin{lemma} \label{lem-reverser-lie-over-R-1}
Let  $X:=\mathrm{J}_{\R}( 0 \pm \ib \nu, \,  2n ) $ be the Jordan block in $\mathfrak{gl}(2n,\R) $,   where $\nu \in  \R$ such that $\nu >0$.  Then $X$ is  strongly $ {\rm Ad}_{{\rm GL }(2n,\R)} $-real.
\end{lemma}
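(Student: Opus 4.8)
The plan is to exhibit an explicit diagonal involution in $\mathrm{GL}(2n,\R)$ that reverses $X$, in the same spirit as Lemmas \ref{lem-reverser-lie-nilpotent}--\ref{lem-reverser-lie-over-R-1}, but now exploiting the real realization $X = \mathrm{J}_{\R}(0\pm\ib\nu,2n) = \Psi(\mathrm{J}(\ib\nu,n))$ together with the embedding $\Psi$ from \eqref{eq-embedding-psi}. First I would record a sign-flip identity over $\C$: writing $\mathrm{J}(\ib\nu,n)=\ib\nu\,\mathrm{I}_n+N$ with $N$ the nilpotent super-diagonal block, the matrix $d:=\textnormal{diag}(1,-1,\dots,(-1)^{n-1})$ satisfies $dNd^{-1}=-N$, whence $d\,\mathrm{J}(-\ib\nu,n)\,d^{-1}=-\ib\nu\,\mathrm{I}_n-N=-\mathrm{J}(\ib\nu,n)$. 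Since $\mathrm{J}(-\ib\nu,n)=\overline{\mathrm{J}(\ib\nu,n)}$, this reads $-\mathrm{J}(\ib\nu,n)=d\,\overline{\mathrm{J}(\ib\nu,n)}\,d^{-1}$.

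The second ingredient is that complex conjugation, transported through $\Psi$, is an honest inner automorphism over $\R$: with $T:=\textnormal{diag}(1,-1)\in\mathrm{M}(2,\R)$ one checks directly that $T\psi(z)T^{-1}=\psi(\bar z)$ for all $z\in\C$, so the block-diagonal involution $\mathcal T:=\mathrm{I}_n\otimes T$ satisfies $\mathcal T\,\Psi(W)\,\mathcal T^{-1}=\Psi(\overline W)$ for every $W\in\mathrm{M}(n,\C)$, with $\mathcal T^2=\mathrm{I}_{2n}$. Combining the two ingredients and using that $\Psi$ is an $\R$-algebra homomorphism, I would compute $-X=\Psi(-\mathrm{J}(\ib\nu,n))=\Psi(d)\,\mathcal T\,\Psi(\mathrm{J}(\ib\nu,n))\,\mathcal T^{-1}\Psi(d)^{-1}=\Psi(d)\,\mathcal T\,X\,\mathcal T^{-1}\Psi(d)^{-1}$. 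Thus $g:=\Psi(d)\,\mathcal T$ reverses $X$. Since $\Psi(d)=\textnormal{diag}(\mathrm{I}_2,-\mathrm{I}_2,\dots,(-1)^{n-1}\mathrm{I}_2)$ and $\mathcal T$ are both diagonal $\pm1$ matrices, they commute and $g$ is a genuine involution (explicitly $g=\textnormal{diag}(1,-1,-1,1,1,-1,\dots)_{2n\times2n}$), so $X$ is strongly $\mathrm{Ad}_{\mathrm{GL}(2n,\R)}$-real.

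The conceptual point — and the only thing that needs care — is that the sign reversal genuinely cannot be realized over $\C$: the matrices $\mathrm{J}(\ib\nu,n)$ and $-\mathrm{J}(\ib\nu,n)$ have the distinct eigenvalues $\ib\nu$ and $-\ib\nu$ (recall $\nu>0$), so they are not $\C$-conjugate, and no complex reverser exists. Reversibility becomes available only after passing to the real form, where complex conjugation swaps $\ib\nu$ and $-\ib\nu$; coupling this conjugation (implemented by $\mathcal T$) with the super-diagonal sign flip $d$ restores the original block and yields the reverser. The remaining work is purely routine verification: the identity $T\psi(z)T^{-1}=\psi(\bar z)$, and the fact that the two diagonal factors $\Psi(d)$ and $\mathcal T$ commute and square to the identity.
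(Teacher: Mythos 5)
Your proof is correct and, up to presentation, is the same as the paper's: your reverser $g=\Psi(d)\,\mathcal{T}=\textnormal{diag}(1,-1,-1,1,1,-1,\dots)_{2n\times 2n}$ is exactly the involution $\textnormal{diag}(\mathrm{I}_{1,1},-\mathrm{I}_{1,1},\dots,(-1)^{n-1}\mathrm{I}_{1,1})$ with $\mathrm{I}_{1,1}=\textnormal{diag}(1,-1)$ that the paper writes down. The only difference is that the paper exhibits this matrix outright and verifies $gXg^{-1}=-X$ directly, whereas you derive the same matrix by factoring it as the super-diagonal sign flip $\Psi(d)$ composed with $\mathcal{T}$ (which implements complex conjugation through $\Psi$) --- a nice conceptual explanation of \emph{why} the involution works, including your correct side remark that no reverser exists over $\C$, but not a different proof.
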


\begin{proof}
Let  $g :=   \textnormal{diag} (\mathrm{I}_{1,1},  -\mathrm{I}_{1,1},  \mathrm{I}_{1,1},  -\mathrm{I}_{1,1}, \dots , (-1)^{n-1} \mathrm{I}_{1,1} \ )_{2n \times 2n }, $  where $ \mathrm{I}_{1,1} := \begin{pmatrix}
		1 &  0 \\
		0 & -1 \\
	\end{pmatrix}$.
Then $g$ is an involution in ${\rm GL }(2n,\R)$ such that $ gXg^{-1} = -X$.  This completes the proof.
\end{proof}

We refer to  \eqref{eq-relation-real-jordan-block} for the notation $\mathrm{J}_{\R}( \mu \mp \ib \nu, \,  2 n )$.  

\begin{lemma}\label{lem-reverser-lie-pair-R}
Let  $X := \mathrm{J}_{\R}( \mu \pm \ib \nu, \,  2 n ) \,  \oplus \,  \mathrm{J}_{\R}( -\mu \mp \ib \nu, \,  2 n ) $ be the Jordan form $\mathfrak{gl}(4n,\R)$, where  $\mu,  \nu \in  \R$ such that $\mu \neq 0$ and $\nu >0$.  Then $X$ is  strongly $ {\rm Ad}_{{\rm GL }(4n,\R)} $-real.
\end{lemma}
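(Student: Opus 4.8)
The plan is to reduce the statement to the already-established complex case, Lemma~\ref{lem-reverser-lie-pair-C}, by pushing a complex reversing involution through the real embedding $\Psi$ of~\eqref{eq-embedding-psi}. Put $\lambda := \mu + \ib\nu \in \C$; since $\nu > 0$ we have $\lambda \neq 0$. Unwinding the definitions~\eqref{equ-real-Jordan block} and~\eqref{eq-relation-real-jordan-block}, the two summands of $X$ are $\mathrm{J}_{\R}(\mu \pm \ib\nu,\, 2n) = \Psi(\mathrm{J}(\lambda, n))$ and $\mathrm{J}_{\R}(-\mu \mp \ib\nu,\, 2n) = \Psi(\mathrm{J}(-\mu - \ib\nu, n)) = \Psi(\mathrm{J}(-\lambda, n))$. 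Because $\Psi$ replaces each entry $z$ by the fixed $2 \times 2$ block $\psi(z)$ and thus merely doubles every matrix index in place, it carries any block-diagonal matrix to the corresponding block-diagonal real matrix; hence
\[
  X = \Psi(\mathrm{J}(\lambda, n)) \oplus \Psi(\mathrm{J}(-\lambda, n)) = \Psi(Y), \qquad Y := \mathrm{J}(\lambda, n) \oplus \mathrm{J}(-\lambda, n) \in \mathfrak{gl}(2n, \C).
\]

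Next I would apply Lemma~\ref{lem-reverser-lie-pair-C} with $\D = \C$: as $\lambda \in \C \setminus \{0\}$, the element $Y$ is strongly ${\rm Ad}_{\mathrm{GL}(2n,\C)}$-real, and its proof furnishes the explicit involution $h = \begin{pmatrix} & \tau \\ \tau & \end{pmatrix} \in \mathrm{GL}(2n,\C)$ with $\tau = \mathrm{diag}(1, -1, \dots, (-1)^{n-1})_{n \times n}$, satisfying $h^2 = \mathrm{I}_{2n}$ and $h Y h^{-1} = -Y$.

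The last step is to transport this relation through $\Psi$. The map $\psi \colon \C \to \mathrm{M}(2,\R)$ is a unital ring homomorphism, so the induced map $\Psi \colon \mathrm{M}(2n,\C) \to \mathrm{M}(4n,\R)$ is a unital $\R$-algebra homomorphism; in particular it preserves products, sums, the identity, and inverses of invertible elements. Setting $g := \Psi(h)$, we obtain $g^2 = \Psi(h^2) = \Psi(\mathrm{I}_{2n}) = \mathrm{I}_{4n}$, so $g$ is an involution in $\mathrm{GL}(4n,\R)$, and
\[
  g X g^{-1} = \Psi(h)\, \Psi(Y)\, \Psi(h)^{-1} = \Psi(h Y h^{-1}) = \Psi(-Y) = -X,
\]
which exhibits $X$ as strongly ${\rm Ad}_{\mathrm{GL}(4n,\R)}$-real. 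Concretely, $g = \begin{pmatrix} & \Psi(\tau) \\ \Psi(\tau) & \end{pmatrix}$ with $\Psi(\tau) = \mathrm{diag}(\mathrm{I}_2, -\mathrm{I}_2, \dots, (-1)^{n-1}\mathrm{I}_2)_{2n \times 2n}$.

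The only point requiring care—and the step I would double-check—is the compatibility of $\Psi$ with the block decomposition, namely that $\Psi(Y_1 \oplus Y_2) = \Psi(Y_1) \oplus \Psi(Y_2)$ and that $\Psi$ is multiplicative. Both follow at once from the facts that $\Psi$ acts entrywise by the fixed substitution $z \mapsto \psi(z)$ and that $\psi$ is a ring homomorphism, so no genuine computation remains; the hypotheses $\mu \neq 0$ and $\nu > 0$ enter only through $\lambda \neq 0$, exactly as required by Lemma~\ref{lem-reverser-lie-pair-C}.
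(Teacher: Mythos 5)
Your proof is correct, but it organizes the argument differently from the paper. The paper proves \lemref{lem-reverser-lie-pair-R} by direct computation over $\R$: it exhibits the involution $g = \begin{pmatrix} & \tau \\ \tau & \end{pmatrix}$ with $\tau = \textnormal{diag}(\mathrm{I}_2, -\mathrm{I}_2, \dots, (-1)^{n-1}\mathrm{I}_2)_{2n\times 2n}$ and verifies the anticommutation relation $\mathrm{J}_{\R}(\mu \pm \ib\nu,\, 2n)\,\tau = -\tau\, \mathrm{J}_{\R}(-\mu \mp \ib\nu,\, 2n)$ by hand. You instead write $X = \Psi(Y)$ with $Y = \mathrm{J}(\lambda, n) \oplus \mathrm{J}(-\lambda, n)$, invoke \lemref{lem-reverser-lie-pair-C} for $\D = \C$, and transport the complex reverser through the $\R$-algebra homomorphism $\Psi$ --- and since $\Psi(h)$ is exactly the paper's $g$, the two proofs produce the identical involution and differ only in where the block computation is done (redone over $\R$ in the paper, inherited from the complex case in yours). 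Your reduction mirrors precisely the technique the paper itself deploys at the group level in \lemref{lem-rev-jodan-type-2 over R}, which cites \lemref{lem-rev-jodan-type-2 over C} together with the fact that $\Psi$ is an embedding, so your route arguably makes the treatment of the Lie-algebra and group cases more uniform; it also makes visible that the hypothesis $\mu \neq 0$ is not needed for this direct sum to be strongly real (only $\lambda \neq 0$ is used; $\mu \neq 0$ serves in \thmref{thm-adjoint-real-gl(n,D)} merely to separate the paired case from the singleton case of \lemref{lem-reverser-lie-over-R-1}). The points you flag for care --- multiplicativity of $\Psi$ and compatibility with direct sums --- are indeed immediate from $\psi$ being a unital ring homomorphism, and the paper already relies on this implicitly in \eqref{eq1-rel-embedding-Theta}; your identification $\mathrm{J}_{\R}(-\mu \mp \ib\nu,\, 2n) = \Psi(\mathrm{J}(-\lambda, n))$ is the correct reading of \eqref{eq-relation-real-jordan-block} with $\mu$ replaced by $-\mu$.
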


\begin{proof} 
	Write $X= \begin{pmatrix}
		\mathrm{J}_{\R}( \mu \pm \ib \nu, \,  2 n ) &    \\
		&   \mathrm{J}_{\R}( -\mu \mp \ib \nu, \,  2 n )   \\
	\end{pmatrix}$.  
Consider  $g =  \begin{pmatrix}
		&  \tau  \\
		\tau  &     \\
	\end{pmatrix}$,  where $\tau := \textnormal{diag} (\mathrm{I}_{2},  -\mathrm{I}_{2},  \mathrm{I}_{2},  -\mathrm{I}_{2}, \dots , (-1)^{n-1} \mathrm{I}_{2} \ )_{2n \times 2n }$.  Observe that $   \mathrm{J}_{\R}( \mu \pm \ib \nu, \,  2 n ) \, \tau=-\tau \,   \mathrm{J}_{\R}( -\mu \mp \ib \nu, \,  2 n )  $.  Then $g$ is an involution in ${\rm GL }(4n,\R)$ such that $ gXg^{-1} = -X$.  This completes the proof.
\end{proof}

\subsection{Strong reversibility of Jordan forms in  ${\rm GL }(n,\D)$} \label{subsec-reverser-GL-nD}
We will apply the results obtained in \S \ref{subsec-str-ad-reality-Jordan} to provide a different proof of strong reversibility of some particular types of Jordan forms in ${\rm GL }(n,\D)$.
Such results are known for $ \D=\R, \C $. We shall extend these results over $ \H $. These results will be used in the \propref{Prop-str-rev-GL-R-C} and the \thmref{thm-strong-rev-GL(n,H)}.

\begin{lemma} \label{lem-rev-jodan-type-1overC}
	Let $ \D = \R, \C$ or $\H$, and $A :=\mathrm{J}(\mu,  \,  n)$ be the Jordan block in $ {\rm GL }(n,\D)$,  where $\mu \in \{ \pm 1 \}$.  Then $A$ is strongly reversible in  ${\rm GL }(n,\D)$.
\end{lemma}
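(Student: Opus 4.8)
The plan is to prove that $A := \mathrm{J}(\mu, n)$ with $\mu \in \{\pm 1\}$ is strongly reversible in ${\rm GL}(n,\D)$ by reducing the problem to the strong adjoint reality statement already established in \lemref{lem-reverser-lie-nilpotent}, via the exponential map. The key observation is that a Jordan block $\mathrm{J}(\mu, n)$ with eigenvalue $\mu = \pm 1$ can be realized as $\mu$ times a unipotent matrix, and that unipotent matrix is the exponential of a nilpotent Jordan-type element of the Lie algebra. Concretely, I would write $A = \mu \,(\mathrm{I}_n + N)$ where $N$ is the nilpotent matrix with $1$'s on the superdiagonal, and then exhibit $\mathrm{I}_n + N = \exp(X)$ for a suitable nilpotent $X \in \mathfrak{gl}(n,\D)$ whose single Jordan block structure matches $\mathrm{J}(0,n)$ (up to conjugation by an invertible matrix that commutes with the reversing involution's effect, or up to the fact that $\exp$ of any nilpotent single-block element is a unipotent single Jordan block).

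First I would handle the reversing conjugation at the group level directly, which is cleaner than routing everything through $\exp$. Let $g := \textnormal{diag}(1, -1, 1, -1, \dots, (-1)^{n-1})_{n \times n}$ be the same involution used in \lemref{lem-reverser-lie-nilpotent}. I would compute $g\, \mathrm{J}(\mu, n)\, g^{-1}$ and check that conjugation by $g$ sends the superdiagonal $1$'s to $-1$'s while fixing the diagonal entries $\mu$. This produces the matrix $\mu \mathrm{I}_n - N$, i.e. a Jordan-type block with $\mu$ on the diagonal and $-1$ on the superdiagonal. The next step is to relate this to $A^{-1}$: since $\mu \in \{\pm 1\}$ we have $\mu^{-1} = \mu$, and $A^{-1} = \mu^{-1}(\mathrm{I}_n + N)^{-1} = \mu(\mathrm{I}_n - N + N^2 - \cdots)$, which is a unipotent-times-$\mu$ matrix but is \emph{not} literally $\mu \mathrm{I}_n - N$. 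So the two matrices $g A g^{-1}$ and $A^{-1}$ are not equal on the nose; they are both single Jordan blocks with eigenvalue $\mu$ (note $gAg^{-1}$ has eigenvalue $\mu$, and so does $A^{-1}$ since $\mu^{-1}=\mu$), hence conjugate to each other in ${\rm GL}(n,\D)$ by uniqueness of Jordan form.

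The main obstacle, therefore, is that producing an \emph{involution} that directly conjugates $A$ to $A^{-1}$ requires more care than just showing $A$ and $A^{-1}$ are conjugate. The honest route is: (i) show $A^{-1}$ is conjugate to $A$ (both are single Jordan blocks with the same eigenvalue $\mu = \mu^{-1}$), and (ii) upgrade conjugacy to conjugacy by an involution. For step (ii) I would conjugate $A$ into the standard form $\mu\exp(\mathrm{J}(0,n))$ and invoke the Lie-algebra result: $\mathrm{J}(0,n)$ is strongly ${\rm Ad}$-real by \lemref{lem-reverser-lie-nilpotent}, so there is an involution $g$ with $g\,\mathrm{J}(0,n)\,g^{-1} = -\mathrm{J}(0,n)$, whence $g\exp(\mathrm{J}(0,n))g^{-1} = \exp(-\mathrm{J}(0,n)) = \exp(\mathrm{J}(0,n))^{-1}$. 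Multiplying by the scalar $\mu = \mu^{-1}$ (which is central and fixed by conjugation) gives $g\,(\mu\exp(\mathrm{J}(0,n)))\,g^{-1} = \mu^{-1}\exp(\mathrm{J}(0,n))^{-1} = (\mu\exp(\mathrm{J}(0,n)))^{-1}$.

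The remaining technical point is verifying that $A = \mathrm{J}(\mu,n)$ is conjugate in ${\rm GL}(n,\D)$ to $\mu\exp(\mathrm{J}(0,n))$; this holds because both are single Jordan blocks of size $n$ with eigenvalue $\mu$, so by the uniqueness in \lemref{lem-Jordan-M(n,D)} there is $h \in {\rm GL}(n,\D)$ with $hAh^{-1} = \mu\exp(\mathrm{J}(0,n))$. Then $\tau := h^{-1} g h$ is an involution in ${\rm GL}(n,\D)$ satisfying $\tau A \tau^{-1} = A^{-1}$, establishing strong reversibility. I would present the argument in this order, doing the scalar and exponential bookkeeping explicitly but leaving the routine verification that $\exp(\mathrm{J}(0,n))$ is a single unipotent Jordan block as a standard fact.
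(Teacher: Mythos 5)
Your proposal is correct and follows essentially the same route as the paper's proof: both reduce to the involution $g$ of \lemref{lem-reverser-lie-nilpotent} via $g\exp(\mathrm{J}(0,n))g^{-1}=\exp(\mathrm{J}(0,n))^{-1}$, then transport $g$ through a conjugation $h$ identifying $\mathrm{J}(\mu,n)$ with (a scalar multiple of) $\exp(\mathrm{J}(0,n))$, so that $h^{-1}gh$ is the required reversing involution. The only cosmetic difference is that you treat $\mu=\pm1$ uniformly using centrality of the real scalar $\mu=\mu^{-1}$, whereas the paper proves the case $\mu=1$ and then deduces $\mu=-1$ from the fact that $-\mathrm{J}(-1,n)$ has Jordan form $\mathrm{J}(1,n)$.
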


\begin{proof}
	First, we will consider $\mu=1$. 
	Then ${\rm J}(1,n)\,=\,{\rm I}_n \,+\, {\rm J}(0,n)$. By setting $N:={\rm J}(0,n)$, we have $g Ng^{-1}=-N, \, g^2={\rm I}_n$, where $ g  $ is as in Lemma \ref{lem-reverser-lie-nilpotent}. This implies $g \,e^N g^{-1}=e^{-N}$.
	Since the Jordan form of  $e^N$ is ${\rm J}(1,n)$, $ {\rm J}(1,n)=\tau e^N \tau^{-1} $ for some $\tau\in {\rm GL}(n,\D)$. Now 
	\begin{align*}
		(\tau g \tau^{-1} ) {\rm J}(1,n) (\tau g^{-1} \tau^{-1}) =\tau g e^N g^{-1}\tau^{-1}=\tau e^{-N}\tau^{-1}=({\rm J}(1,n))^{-1} \,.
	\end{align*} 
	Hence, ${\rm J}(1,n)$ is strongly reversible. The case when $ \mu =-1 $ follows from the fact that $-{\rm J}(-1,n)  $ has Jordan form $ {\rm J}(1,n) $ and  ${\rm J}(1,n)$ is strongly reversible.
	This completes the proof.
\end{proof}

\begin{lemma} \label{lem-rev-jodan-type-2 over C}
	Let  $A:=\mathrm{J}(\lambda, n) \oplus \mathrm{J}(\lambda^{-1}, n)   $ be the Jordan form in $ {\GL}(2n,\D)$,   where $ \lambda \in \D \setminus \{\pm 1, 0\},  $ for $ \D = \R $ or $ \C, $ and for $ \D= \H$,  $\lambda  \in \C \setminus \{0\} $  with non-negative imaginary part  such that  $ |\lambda| \neq 1$.  Then $A$ is strongly reversible in  ${\rm GL }(2n,\D)$.
\end{lemma}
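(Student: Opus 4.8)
The plan is to follow the template of \lemref{lem-rev-jodan-type-1overC}: realize $A$ as the exponential of a Jordan form in the Lie algebra that is already known to be strongly adjoint real by \lemref{lem-reverser-lie-pair-C}, and then transport the reversing involution through the exponential map together with a Jordan-form conjugation. Concretely, I would choose $z$ with $e^{z}=\lambda$ and set $X:=\mathrm{J}(z,n)\oplus\mathrm{J}(-z,n)\in\mathfrak{gl}(2n,\D)$. Since $\mathrm{J}(z,n)=z\,\mathrm{I}_n+\mathrm{J}(0,n)$ with $z\,\mathrm{I}_n$ central, $\exp(\mathrm{J}(z,n))$ is $e^{z}$ times a single-block unipotent matrix, so its Jordan form is $\mathrm{J}(e^{z},n)=\mathrm{J}(\lambda,n)$; likewise $\exp(\mathrm{J}(-z,n))$ has Jordan form $\mathrm{J}(\lambda^{-1},n)$. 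Hence $\exp(X)$ is conjugate to $A$ in $\mathrm{GL}(2n,\D)$ by \lemref{lem-Jordan-M(n,D)}.

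By \lemref{lem-reverser-lie-pair-C} there is an involution $g\in\mathrm{GL}(2n,\D)$ with $gXg^{-1}=-X$, whence $g\,\exp(X)\,g^{-1}=\exp(-X)=\exp(X)^{-1}$. Writing $A=\tau\,\exp(X)\,\tau^{-1}$ for a suitable $\tau\in\mathrm{GL}(2n,\D)$, the element $\tau g\tau^{-1}$ is an involution conjugating $A$ to $A^{-1}$, exactly as in the displayed computation of \lemref{lem-rev-jodan-type-1overC}. This settles the claim once $z$ can be selected inside the domain required by \lemref{lem-reverser-lie-pair-C}.

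Choosing $z$ is the crux. For $\D=\C$ take $z=\log\lambda$ (principal branch); then $z\neq0$ because $\lambda\neq1$, as needed. For $\D=\H$ write $\lambda=re^{\ib\theta}$ with $r>0$, $\theta\in[0,\pi]$, and set $z=\ln r+\ib\theta$: its imaginary part is non-negative and its real part $\ln r$ is nonzero precisely because $|\lambda|\neq1$, matching the hypothesis of \lemref{lem-reverser-lie-pair-C} over $\H$. For $\D=\R$ with $\lambda>0$ one simply takes $z=\ln\lambda\in\R\setminus\{0\}$.

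The main obstacle is the remaining real case $\lambda<0$, where no real logarithm exists and the exponential route breaks down. Here I would argue directly: since $\mathrm{J}(\lambda,n)^{-1}$ and $\mathrm{J}(\lambda^{-1},n)$ are single Jordan blocks sharing the eigenvalue $\lambda^{-1}$, \lemref{lem-Jordan-M(n,D)} supplies $B\in\mathrm{GL}(n,\D)$ with $B\,\mathrm{J}(\lambda^{-1},n)\,B^{-1}=\mathrm{J}(\lambda,n)^{-1}$. The anti-diagonal matrix $\tau:=\begin{pmatrix}0 & B\\ B^{-1} & 0\end{pmatrix}$ then satisfies $\tau^{2}=\mathrm{I}_{2n}$, and a short block computation gives $\tau A\tau^{-1}=A^{-1}$. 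This construction is in fact uniform in $\D$ and would serve as a clean alternative in all cases; I retain the exponential version as the primary route to stay in line with the adjoint-reality method of \S\ref{subsec-str-ad-reality-Jordan}.
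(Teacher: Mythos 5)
Your proposal is correct, and its primary route is essentially the paper's own: the paper likewise picks $\mu \in \C$ with $e^{\mu}=\lambda$, uses the involution $\sigma=\mathrm{diag}(1,-1,\dots,(-1)^{n-1})$ to get $\sigma\,\exp(\mathrm{J}(\mu,n))\,\sigma^{-1}=\bigl(\exp(\mathrm{J}(-\mu,n))\bigr)^{-1}$, and conjugates by Jordan-form matrices $P,Q$ to produce an anti-diagonal reversing involution for $A$. The one divergence is your case split at $\D=\R$, $\lambda<0$: the paper does not need it, because it allows $\mu\in\C$ even when $\D=\R$ --- the matrices that actually occur, $\exp(\mathrm{J}(\pm\mu,n))=\lambda^{\pm1}\exp(\mathrm{J}(0,n))$, are real, so $P,Q$ can be taken in ${\rm GL}(n,\R)$ and the entire computation stays over $\R$; thus the exponential route does not in fact break down there, the complex $\mu$ being only a bookkeeping device. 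Your fallback construction $\tau=\begin{pmatrix}0&B\\ B^{-1}&0\end{pmatrix}$ with $B\,\mathrm{J}(\lambda^{-1},n)\,B^{-1}=\mathrm{J}(\lambda,n)^{-1}$ is correct (inverting this relation yields the other diagonal block), and it is worth noting that it is exactly the distilled form of the paper's involution: the paper's $g$ is anti-diagonal with blocks $B=P^{-1}\sigma Q$ and $B^{-1}$, and \eqref{eq-Jordan-conj-inv} is precisely the similarity your $B$ must satisfy. As you observe, the swap argument alone proves the lemma uniformly in $\D$ --- one needs only that $\mathrm{J}(\lambda,n)^{-1}$ is conjugate to $\mathrm{J}(\lambda^{-1},n)$ in ${\rm GL}(n,\D)$, which holds over $\H$ too since both have the single eigenvalue class $[\lambda^{-1}]$ with one block of size $n$ --- so the exponential machinery is dispensable here, though it keeps the proof in line with the adjoint-reality method of \S\ref{subsec-str-ad-reality-Jordan}, which is the point of the paper's treatment.
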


\begin{proof} 
	Let  $ \mu \in \C $ such that $ e^\mu =\lambda $.  Note that 
	$ \exp( {\rm J}(\mu, n)) \,=\, \exp(\mu {\rm I}_n )\cdot \exp({\rm J}(0, n)) \,  = \,  \lambda \,  \exp({\rm J}(0, n)).$ 	Let $ P \in  {\GL}(n,\D)$  so that $ \lambda  \, \exp({\rm J}(0, n)) \,=\, P  {\rm J}(\lambda, n) \, P^{-1} $. Thus
	\begin{equation}\label{eq-1}
		\exp( {\rm J}(\mu, n)) \,=\, P  {\rm J}(\lambda, n) \, P^{-1}.	\end{equation}
	Similarly, there exists   $ Q \in  {\GL}(n,\D) $ such that 
	\begin{equation}\label{eq-2} \exp( {\rm J}(-\mu, n)) \, = 	\, Q \,  {\rm J}(\lambda^{-1}, n) \, Q^{-1} .
	\end{equation}
	Now, 	let $ \sigma := {\rm diag} (1,-1, \dots , (-1)^{n-1}) $ be an involution in $
	{\GL}(n,\D) $ so that $ 
	-{\rm J}(-\mu, n) = \sigma \, {\rm J}(\mu, n) \, \sigma^{-1}$. This implies   
	\begin{equation}\label{eq-3} \sigma  \,  \exp({\rm J}(\mu, n))  \, \sigma^{-1}\,=\,   \Big( \exp( {\rm J}(-\mu, n)) \Big)^{-1}.
	\end{equation}
	Using \eqref{eq-1}, \eqref{eq-2} and \eqref{eq-3}, we have
	\begin{equation}\label{eq-Jordan-conj-inv}
		\sigma \, P \,  {\rm J}(\lambda, n) \, P^{-1} \,  \sigma^{-1} = \Big ( Q \,  {\rm J}(\lambda^{-1}, n) \,  Q^{-1} \Big)^{-1}.
	\end{equation}
	Consider the involution  $ g := \begin{pmatrix}
		P\\
		&Q
	\end{pmatrix}^{-1} \begin{pmatrix}
		& \sigma\\
		\sigma^{-1} &
	\end{pmatrix} \begin{pmatrix}
		P\\
		&Q
	\end{pmatrix}$ in $ {\GL}(2n,\D) $.  Then $gAg^{-1}$  $\,=\, A^{-1} $ if and only if  
	\begin{equation} \label{eq-12}
		(Q^{-1} \sigma \, P )\,  {\rm J}(\lambda, n) \, 	(Q^{-1} \sigma \, P )^{-1} = \Big ( {\rm J}(\lambda^{-1}, n) \Big)^{-1}\,. 
	\end{equation} 
	Now the proof follows from  Equation \eqref{eq-Jordan-conj-inv}.
\end{proof}

\begin{lemma} \label{lem-rev-jodan-unit-modulus- H}
	Let  $A := \mathrm{J}(\mu,  \,  n) \, \oplus \,  \mathrm{J}(\mu,  \,  n) $ be the Jordan block in $ {\rm GL }(2n,\H)$,  where $ \mu \in \C \setminus \{\pm 1\}$,  with non-negative imaginary part such that $|\mu|= 1$.  Then $A$ is strongly reversible in  ${\rm GL }(2n,\H)$.
\end{lemma}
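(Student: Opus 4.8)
The plan is to descend to the Lie algebra $\mathfrak{gl}(2n,\H)$, invoke the strong adjoint reality already established in \lemref{lem-reverser-lie-pair-H}, and transport the reversing involution back through the exponential map, exactly as was done in the field-type argument of \lemref{lem-rev-jodan-type-2 over C}. Since $|\mu|=1$, $\mu\neq\pm1$, and $\mu$ has non-negative imaginary part, I first write $\mu = e^{\theta\ib}$ with $\theta\in(0,\pi)$, under the identification $\C\hookrightarrow\H$ sending $i\mapsto\ib$. Set
\[
Y := \mathrm{J}(\theta\ib,\,n)\oplus\mathrm{J}(\theta\ib,\,n)\in\mathfrak{gl}(2n,\H).
\]
As $\theta>0$, this is precisely the Jordan form treated in \lemref{lem-reverser-lie-pair-H} (with the role of their positive real parameter played by $\theta$), so $Y$ is strongly $\mathrm{Ad}_{{\rm GL}(2n,\H)}$-real: there is an involution $g\in{\rm GL}(2n,\H)$ with $gYg^{-1}=-Y$. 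Applying the exponential map and using $\exp(gYg^{-1})=g\exp(Y)g^{-1}$ then gives
\[
g\,\exp(Y)\,g^{-1}=\exp(-Y)=\big(\exp(Y)\big)^{-1}.
\]

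Next I would identify $\exp(Y)$ with $A$ up to conjugacy. Since $\theta\ib\,\mathrm{I}_n$ is a central scalar matrix commuting with the nilpotent part $\mathrm{J}(0,n)$, I get
\[
\exp(\mathrm{J}(\theta\ib,\,n)) = \exp(\theta\ib\,\mathrm{I}_n)\,\exp(\mathrm{J}(0,n)) = \mu\,\exp(\mathrm{J}(0,n)),
\]
where $e^{\theta\ib}=\cos\theta+\ib\sin\theta$ corresponds to $\mu$ under the above identification. As $\exp(\mathrm{J}(0,n))$ is a single unipotent Jordan block, $\mu\,\exp(\mathrm{J}(0,n))$ has Jordan form $\mathrm{J}(\mu,n)$, so there is $P\in{\rm GL}(n,\H)$ with $\exp(\mathrm{J}(\theta\ib,n))=P\,\mathrm{J}(\mu,n)\,P^{-1}$. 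Writing $R:=P\oplus P$, this yields $\exp(Y)=R\,A\,R^{-1}$.

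Finally, I set $h:=R^{-1}gR$, which is again an involution because $g^2=\mathrm{I}_{2n}$. Then
\[
h\,A\,h^{-1}=R^{-1}\big(g\,\exp(Y)\,g^{-1}\big)R=R^{-1}\big(\exp(Y)\big)^{-1}R=A^{-1},
\]
so $A$ is strongly reversible in ${\rm GL}(2n,\H)$. The only genuinely computational step is the identification of the Jordan form of $\exp(\mathrm{J}(\theta\ib,n))$ with $\mathrm{J}(\mu,n)$, which is the direct quaternionic analogue of \eqref{eq-1} and \eqref{eq-2} in \lemref{lem-rev-jodan-type-2 over C} and presents no real obstacle; the essential content of the statement has already been absorbed into the Lie-algebra computation of \lemref{lem-reverser-lie-pair-H}.
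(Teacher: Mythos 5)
Your proof is correct, but it takes a genuinely different route from the paper's. The paper stays entirely at the group level: it writes $A$ as the block-diagonal matrix with two copies of $P=\mathrm{J}(e^{\ib\theta},n)$ and directly constructs an anti-diagonal involution $g=\begin{pmatrix} & X\\ X^{-1} & \end{pmatrix}$ with $X=Y\jb$, using the quaternionic relation $\jb\,Z=\overline{Z}\,\jb$ to convert the condition $P^{-1}X=XP$ into the complex intertwining identity $\bigl(\mathrm{J}(e^{\ib\theta},n)\bigr)^{-1}Y=Y\,\mathrm{J}(e^{-\ib\theta},n)$, which is supplied by the construction inside the proof of \lemref{lem-rev-jodan-type-2 over C}. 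You instead descend to $\mathfrak{gl}(2n,\H)$, quote the strong adjoint reality of $\mathrm{J}(\theta\ib,n)\oplus\mathrm{J}(\theta\ib,n)$ from \lemref{lem-reverser-lie-pair-H}, and transport the involution through $\exp$ --- the same template the paper itself uses for \lemref{lem-rev-jodan-type-1overC} and \lemref{lem-rev-jodan-type-1 over R}, but not for this lemma. Your route makes the quaternionic case uniform with those two lemmas and hides the $\jb$-trick inside the already-proved Lie-algebra statement; its price is the extra verification that $\exp(\mathrm{J}(\theta\ib,n))$ has quaternionic Jordan form $\mathrm{J}(\mu,n)$, which you correctly handle (since $\mu(\exp(\mathrm{J}(0,n))-\mathrm{I}_n)$ has rank $n-1$ over $\C$, and complex similarity persists over $\H$). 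One small imprecision: $\theta\ib\,\mathrm{I}_n$ is \emph{not} central in $\mathfrak{gl}(n,\H)$ (it fails to commute with matrices involving $\jb$); what your factorization $\exp(\mathrm{J}(\theta\ib,n))=\mu\exp(\mathrm{J}(0,n))$ actually needs is only that $\theta\ib\,\mathrm{I}_n$ commutes with the \emph{real} matrix $\mathrm{J}(0,n)$, which is true, so the argument stands after rewording.
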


\begin{proof} Recall that $ \jb \, Z = \overline{Z} \,  \jb$ for all $Z \in  {\rm GL }(n,\C)$, where $\overline{Z}$ is the matrix obtained by taking the conjugate of each entry of the complex matrix $Z$.
	We  can assume that $A =      \mathrm{J}(e^{ \ib \theta},   \,  n) \, \oplus \,      \mathrm{J}(e^{ \ib \theta},   \,  n)$,  where  $ \theta \in (0,\pi)$.
	Write $A = \begin{pmatrix}
		P & \\
		&   P \\ 
	\end{pmatrix}$, 
	where $P =     \mathrm{J}(e^{ \ib \theta},   \,  n) \in  {\rm GL }(n,\C)$.  To show $A$ is strongly reversible, it is sufficient to find a $g =    \begin{pmatrix}
		& X \\
		X^{-1} &  \\ 
	\end{pmatrix}
	\in {\rm GL }(2n,\H)$ such that $P^{-1}X = XP$, where $X \in {\rm GL }(n,\H)$.  Further,  If  $X = Y \jb $ for some $ Y  \in {\rm GL }(n,\C)$, then we require $P^{-1}Y = Y \overline{P} $, i.e.,    $  \Big(\mathrm{J}(e^{ \ib \theta}, n)\Big)^{-1} \, Y = Y \, \mathrm{J}(e^{ -\ib \theta}, n)$. 
	Using the construction as done in the proof of  \lemref{lem-rev-jodan-type-2 over C}, we can find such $Y$ in $ {\rm GL }(n,\C)$, see \eqref{eq-12}. This completes the proof.
\end{proof}

\begin{lemma} \label{lem-rev-jodan-type-1 over R} 
	Let  $A:={ \mathrm{J}_{\R}(\mu \pm \ib \nu, \,  2n ) } $ be the Jordan block in  $ {\rm GL }(2n,\R)$ as in  \eqref{equ-real-Jordan block}.  If $\mu^2 + \nu^2 =1$, then $A$ is strongly reversible in  ${\rm GL }(2n,\R)$.
\end{lemma}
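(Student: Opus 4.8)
The plan is to mirror the argument of \lemref{lem-rev-jodan-type-2 over C}, passing from the group $\mathrm{GL}(2n,\R)$ to the Lie algebra $\mathfrak{gl}(2n,\R)$ via the exponential map and then invoking the strong adjoint reality results already established in \S\ref{subsec-str-ad-reality-Jordan}. The key point is that $A = \mathrm{J}_{\R}(\mu\pm\ib\nu,\,2n)$ is the real Jordan form attached to the complex number $\lambda = \mu + \ib\nu$ on the unit circle, so $A$ should be realized as the exponential of a matrix $X$ in $\mathfrak{gl}(2n,\R)$ whose spectrum is purely imaginary.

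First I would write $\lambda = \mu + \ib\nu = e^{\ib\theta}$ for some $\theta\in(0,\pi)$, using the hypothesis $\mu^2+\nu^2=1$. Under the embedding $\Psi$ of \eqref{eq-embedding-psi}, the complex Jordan block $\mathrm{J}(\ib\theta, n)$ maps to a matrix $X := \Psi(\mathrm{J}(\ib\theta,n)) \in \mathfrak{gl}(2n,\R)$, and by \eqref{eq1-rel-embedding-Theta} we have $\exp(X) = \Psi(\exp(\mathrm{J}(\ib\theta,n)))$. Since $\exp(\mathrm{J}(\ib\theta,n))$ has Jordan form $\mathrm{J}(e^{\ib\theta},n) = \mathrm{J}(\lambda,n)$, applying $\Psi$ shows that $\exp(X)$ is conjugate in $\mathrm{GL}(2n,\R)$ to the real Jordan form $\mathrm{J}_{\R}(\mu\pm\ib\nu,\,2n) = A$; that is, $A = P\exp(X)P^{-1}$ for some $P\in\mathrm{GL}(2n,\R)$. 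The matrix $X$ is, up to the ordering conventions in $\Psi$, of the form $\mathrm{J}_{\R}(0\pm\ib\theta,\,2n)$, which is exactly the Lie-algebra element treated in \lemref{lem-reverser-lie-over-R-1}.

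Next I would invoke \lemref{lem-reverser-lie-over-R-1}: there is an involution $g\in\mathrm{GL}(2n,\R)$ with $gXg^{-1} = -X$, hence $g\exp(X)g^{-1} = \exp(-X) = (\exp(X))^{-1}$. Conjugating by $P$, the element $\tau := PgP^{-1}$ is again an involution in $\mathrm{GL}(2n,\R)$ and satisfies $\tau A \tau^{-1} = P g \exp(X) g^{-1} P^{-1} = P(\exp(X))^{-1}P^{-1} = A^{-1}$. This exhibits $A$ as conjugate to its inverse by an involution, so $A$ is strongly reversible in $\mathrm{GL}(2n,\R)$, as desired.

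The main obstacle is the bookkeeping needed to confirm that $\Psi(\mathrm{J}(\ib\theta,n))$ coincides, up to conjugacy and the block-ordering conventions of \eqref{equ-real-Jordan block}, with $\mathrm{J}_{\R}(0\pm\ib\theta,\,2n)$, so that \lemref{lem-reverser-lie-over-R-1} applies verbatim; one must check that the embedding $\Psi$ sends the complex Jordan block to the real block form with the correct $K$ and identity superdiagonal blocks. A subtler technical point is that $\exp$ does not directly intertwine the real Jordan form with $X$ — it first produces $\Psi(\exp(\mathrm{J}(\ib\theta,n)))$, and one needs the uniqueness of the real Jordan form (\lemref{lem-Jordan-M(n,D)}(1)) to absorb the passage from $\exp(X)$ to $A$ into the conjugating matrix $P$. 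Once these identifications are in place, the strong reversibility follows immediately by transporting the Lie-algebra involution through $P$.
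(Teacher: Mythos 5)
Your proposal is correct and takes essentially the same approach as the paper's proof: write $\mu+\ib\nu=e^{\ib\theta}$, realize $A$ up to $\GL(2n,\R)$-conjugacy as $\exp\bigl(\mathrm{J}_{\R}(0\pm\ib\theta,\,2n)\bigr)$ via the embedding $\Psi$ and \eqref{eq1-rel-embedding-Theta}, and transport the reversing involution of \lemref{lem-reverser-lie-over-R-1} through the conjugating matrix. The only cosmetic differences are that the paper reconstructs that involution explicitly and obtains the conjugacy $\exp(X)\sim A$ by applying $\Psi$ directly to $\exp(\mathrm{J}(\ib\theta,n))=h\,\mathrm{J}(e^{\ib\theta},n)\,h^{-1}$ rather than invoking uniqueness of the real Jordan form; also, your bookkeeping worry is vacuous, since $\Psi(\mathrm{J}(\ib\theta,n))=\mathrm{J}_{\R}(0\pm\ib\theta,\,2n)$ holds verbatim by the definition \eqref{equ-real-Jordan block}.
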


\begin{proof} 
	Let $K:= \begin{pmatrix}
		\mu  & \nu   \\
		-\nu &   \mu   \\
	\end{pmatrix}$ where $\mu^2 + \nu^2 =1$. Then $K\in {\rm SO}(2)$. Let  $Y = \begin{pmatrix}
		0  & a  \\
		-a &   0 \\
	\end{pmatrix}\in \s\o(2)$ such that $ \exp (Y) \,=\,K $.   Note that for  $\mathrm{I}_{1,1} := {\rm diag}(1,-1)$,  we have $  \mathrm{I}_{1,1} \, Y \,  (\mathrm{I}_{1,1})^{-1} \,=\, -Y$.
	Consider $$ X:= { \mathrm{J}_{\R}(0 \pm \ib a, \,  2n ) }  
	= \begin{pmatrix}
		Y & {\rm I}_2\\
		& \ddots &\ddots\\
		&& 	Y & {\rm I}_2\\
		&& &	Y 
	\end{pmatrix}  \in  {\rm GL}(2n, \R).$$
	Let $ g:={\rm diag}(\mathrm{I}_{1,1}, -\mathrm{I}_{1,1}, \dots, (-1)^{n-1}\mathrm{I}_{1,1}) \in {\rm GL}(2n, \R) $. Then $ gXg^{-1}=-X$, and hence $ \exp(X) $ is strongly reversible. 
	To conclude the proof, it is sufficient to show that $ \exp(X) $ and $\mathrm{J}_{\R}(\mu \pm \ib \nu, \,  2n )$ are conjugate. To see this recall that $ \Psi ({\rm J}(\mu+\ib \nu, n)) \,=\, \mathrm{J}_{\R}(\mu \pm \ib \nu, \,  2n )$, and $ \Psi({\rm J}(\ib a, n) )\,=\, X $, where the map $\Psi  $ is as in \eqref{eq-embedding-psi}. Since $ \exp (Y) \,=\,K $,  we have $ e^{ia} = \mu+\ib \nu$. Then 
	\begin{equation}\label{eq-relation--embedding-Theta}
		\exp({\rm J}(\ib a, n) )  \,= \,(\mu + \ib \nu)\exp({\rm J}(0, n) ) \,=\, h \, {\rm J}(\mu+\ib \nu, \,n) \, h^{-1} \, ,
	\end{equation}
	for some $ h\in {\rm GL}(n,\C) $.
	By using Equations  \eqref{eq1-rel-embedding-Theta}  and \eqref{eq-relation--embedding-Theta},  we have 
	$$
	\exp (X)  = \Psi(\exp (\mathrm{J}(\ib a, \,  n ))  \,=  \, h \, \Big ( \Psi ( {\rm J}(\mu+\ib \nu, \,n))  \Big)  \, h^{-1}  =  \, h \, \mathrm{J}_{\R}(\mu \pm \ib \nu, \,  2n )   \, h^{-1}.$$
	Therefore,  $\mathrm{J}_{\R}(\mu \pm \ib \nu, \,  2n) $ is strongly reversible in ${\rm GL}(2n,\R) $.  This completes the proof.
\end{proof}

\begin{lemma} \label{lem-rev-jodan-type-2 over R}
	Let  
	$ A:= \mathrm{J}_{\R}(\mu \pm \ib \nu, \,  2n )\,  \oplus  \,  \mathrm{J}_{\R}\big(\frac{\mu}{\mu^2 + \nu^2 } \mp \ib \frac{\nu}{\mu^2 + \nu^2 }, \,  2n\big )$ 
	be the Jordan form  in $ {\rm GL }(4n,\R)$,  where $\mu^2 + \nu^2 \neq 1$.
	Then $A$ is strongly reversible in  ${\rm GL }(4n,\R)$.
\end{lemma}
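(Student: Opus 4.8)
The plan is to reduce everything to the complex case settled in \lemref{lem-rev-jodan-type-2 over C} via the real embedding $\Psi$ of \eqref{eq-embedding-psi}. Put $\lambda := \mu + \ib\nu \in \C$. Since $\nu > 0$, $\lambda$ is non-real, so $\lambda \in \C \setminus \{0, \pm 1\}$, which are exactly the hypotheses of \lemref{lem-rev-jodan-type-2 over C} for $\D = \C$. Reading off the definitions \eqref{equ-real-Jordan block} and \eqref{eq-relation-real-jordan-block}, the first summand of $A$ equals $\Psi(\mathrm{J}(\lambda, n))$, while the second equals $\Psi(\mathrm{J}(\lambda^{-1}, n))$, because $\lambda^{-1} = \tfrac{\mu - \ib\nu}{\mu^2+\nu^2} = \tfrac{\mu}{\mu^2+\nu^2} - \ib\,\tfrac{\nu}{\mu^2+\nu^2}$. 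As $\psi(0)=0$, the map $\Psi$ carries a block-diagonal matrix to the block-diagonal matrix of the images, so
\[ A = \Psi\big(\mathrm{J}(\lambda, n)\big) \oplus \Psi\big(\mathrm{J}(\lambda^{-1}, n)\big) = \Psi(B), \qquad B := \mathrm{J}(\lambda, n) \oplus \mathrm{J}(\lambda^{-1}, n) \in {\rm GL}(2n, \C). \]

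Next I would apply \lemref{lem-rev-jodan-type-2 over C} to $B$: it furnishes an involution $g \in {\rm GL}(2n, \C)$ with $gBg^{-1} = B^{-1}$. The essential point is that $\Psi$ is a unital ring homomorphism, hence respects products, the identity, and therefore inverses. Consequently $\Psi(g) \in {\rm GL}(4n, \R)$ is again an involution, $\Psi(g)^2 = \Psi(g^2) = \mathrm{I}_{4n}$, and
\[ \Psi(g)\,A\,\Psi(g)^{-1} = \Psi\big(gBg^{-1}\big) = \Psi\big(B^{-1}\big) = \Psi(B)^{-1} = A^{-1}. \]
Thus $A$ is conjugate to $A^{-1}$ by an involution of ${\rm GL}(4n,\R)$, i.e.\ strongly reversible.

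In closer parallel with \lemref{lem-rev-jodan-type-1 over R}, the same conclusion can be reached through the Lie algebra, which is the route suggested by the placement of \lemref{lem-reverser-lie-pair-R}. Write $\lambda = e^{\alpha}$ with $\alpha = \beta + \ib\gamma$, where $\beta = \ln|\lambda| \neq 0$ (as $\mu^2+\nu^2 \neq 1$) and $\gamma = \arg\lambda \in (0,\pi)$. Then $\tilde X := \mathrm{J}_{\R}(\beta \pm \ib\gamma, 2n) \oplus \mathrm{J}_{\R}(-\beta \mp \ib\gamma, 2n) \in \mathfrak{gl}(4n,\R)$ is strongly ${\rm Ad}_{{\rm GL}(4n,\R)}$-real by \lemref{lem-reverser-lie-pair-R} (its hypotheses $\beta \neq 0$, $\gamma > 0$ hold here); exponentiating the reversing involution shows $\exp(\tilde X)$ is strongly reversible. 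Using $\Psi(\exp(\,\cdot\,)) = \exp(\Psi(\,\cdot\,))$ from \eqref{eq1-rel-embedding-Theta} and the fact that $\exp(\mathrm{J}(\pm\alpha, n)) = \lambda^{\pm 1}\exp(\mathrm{J}(0,n))$ is conjugate to $\mathrm{J}(\lambda^{\pm 1}, n)$, one identifies $\exp(\tilde X)$ with a ${\rm GL}(4n,\R)$-conjugate of $A$; since strong reversibility is conjugacy-invariant, $A$ is again strongly reversible.

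The only delicate part of either route is pure bookkeeping: one must verify that the $\pm/\mp$ conventions of the real-Jordan notation in \eqref{equ-real-Jordan block}--\eqref{eq-relation-real-jordan-block} genuinely pair the summands of $A$ (resp.\ $\tilde X$) with $\lambda, \lambda^{-1}$ (resp.\ $\alpha, -\alpha$), and that $\Psi$ commutes with forming direct sums. Neither presents a real obstacle; the entire content of the lemma is inherited from the complex case through the homomorphism $\Psi$.
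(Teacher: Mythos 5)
Your main argument is essentially identical to the paper's proof: both identify $A = \Psi\big(\mathrm{J}(\lambda, n) \oplus \mathrm{J}(\lambda^{-1}, n)\big)$ with $\lambda = \mu + \ib\nu$ (using that $\lambda^{-1} = \tfrac{\mu}{\mu^2+\nu^2} - \ib\tfrac{\nu}{\mu^2+\nu^2}$, so the second real block is $\Psi(\mathrm{J}(\lambda^{-1},n))$ by \eqref{eq-relation-real-jordan-block}) and then transport the reversing involution furnished by \lemref{lem-rev-jodan-type-2 over C} through the embedding $\Psi$; your write-up merely makes explicit the ring-homomorphism bookkeeping that the paper compresses into ``the fact that $\Psi$ is an embedding.'' Your alternative Lie-algebra route via \lemref{lem-reverser-lie-pair-R} is also correct but is extra material the paper does not use for this lemma.
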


\begin{proof}
	Let $ z\in \C $ such that  $ e^{z}\,=\, \mu+i\nu$ and  $ e^{-z} = \frac{\mu}{\mu^2 + \nu^2 } -\ib \frac{\nu}{\mu^2 + \nu^2 }$.  Recall that the embedding  $\Psi  $ is as in  \eqref{eq-embedding-psi}. Using \eqref{equ-real-Jordan block} and \eqref{eq-relation-real-jordan-block}, we have  $$ \Psi ({\rm J}(e^z, n)) = \mathrm{J}_{\R}(\mu \pm \ib \nu, \,  2n ) \, \hbox{  and  } \, \Psi({\rm J}(e^{-z} , n) )= \mathrm{J}_{\R}(\frac{\mu}{\mu^2 + \nu^2 } \mp \ib \frac{\nu}{\mu^2 + \nu^2 }, \,  2n )  .$$  Therefore, we can write $A$ as  
	\begin{equation}
		A = \Psi (P),  \hbox {where }  P =   \begin{pmatrix}
			{\rm J}(e^z, n) &   \\
			&  {\rm J}(e^{-z}, n)  \\
		\end{pmatrix} \in {\rm GL }(2n,\C).
	\end{equation}
	The proof now follows from  Lemma \ref{lem-rev-jodan-type-2 over C} and the fact that $\Psi$ is an embedding.
\end{proof}

\section{Adjoint reality in $\mathfrak{gl}(n,\mathbb{D} )$}\label{sec-gl(n,D)-4}

In this section, we investigate adjoint reality in $\mathfrak{gl}(n,\D)$. First we classify  $ {\rm Ad}_{{\rm GL }(n,\D)} $-real elements in the Lie algebra $\mathfrak{gl}(n,\D)$.

\begin{theorem} \label{thm-adjoint-real-gl(n,D)} 
Let $\mathbb{D} = \R, \C$ or $\H$.  An element $X \in \mathfrak{gl}(n,\D)$ with Jordan canonical form  as given in Lemma \ref{lem-Jordan-M(n,D)}  is $ {\rm Ad}_{{\rm GL }(n,\D)} $-real if and only if the following hold:
\begin{enumerate}

\medskip 

\item \label{part-adjoint-real-R}
For $ \D=\R $, the blocks can be partitioned into  pairs  $\{ \mathrm{J}_{\R}( \mu \pm \ib \nu, \,  2 t ),  \mathrm{J}_{\R}( -\mu \mp \ib \nu, \,  2 t ) \} $,  $ \{ \mathrm{J}(\lambda, s),\mathrm{J}(-\lambda, s)\} $ or, singletons  $ \{\mathrm{J}_{\R}(0 \pm \ib \nu, \,  2 \ell ) \}$, $\{\mathrm{J}(0, m  )\}$,  where $\lambda, \mu,\nu \in \R $ and $ \lambda, \mu \neq 0,  \nu > 0$.
		
\medskip 

\item  \label{part-adjoint-real-C} For $ \D=\C $, the blocks can be partitioned into   pairs $ \{ \mathrm{J}(\lambda, s),\mathrm{J}(-\lambda, s)\} $ or, singletons $\{\mathrm{J}(0, m  )\}$,  where $\lambda \in \C $ and $ \lambda \neq 0$.
		
\medskip 

\item \label{part-adjoint-real-H} For $ \D=\H $, the blocks can be partitioned into  pairs $ \{ \mathrm{J}(\lambda, s),\mathrm{J}(-\lambda, s)\} $ or, singletons $\{\mathrm{J}(\mu, m  )\}$,  where $\lambda, \mu \in \C $  with non-negative imaginary parts  such that real part of $ \lambda \neq 0$ and real part of $\mu =0$.
\end{enumerate}
\end{theorem}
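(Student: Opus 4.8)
The plan is to reduce the statement to a purely combinatorial condition on Jordan canonical forms. By definition, $X$ is $\mathrm{Ad}_{\mathrm{GL}(n,\D)}$-real precisely when there is some $g\in\mathrm{GL}(n,\D)$ with $gXg^{-1}=-X$, i.e.\ when $X$ and $-X$ are conjugate in $\mathrm{GL}(n,\D)$. Since two elements of $\mathrm{M}(n,\D)$ are $\mathrm{GL}(n,\D)$-conjugate if and only if they share the same Jordan canonical form up to a permutation of blocks (\lemref{lem-Jordan-M(n,D)}), the whole question becomes: \emph{compute the Jordan form of $-X$ from that of $X$, and decide when the two coincide.} Note that this reduction is valid even for non-invertible $X$, since \lemref{lem-Jordan-M(n,D)} applies to all of $\mathrm{M}(n,\D)$.

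The computational heart is to record how negation acts on a single block. First I would show that $-\mathrm{J}(\lambda,m)$ is conjugate to $\mathrm{J}(-\lambda,m)$ over any $\D$, by conjugating with $D=\mathrm{diag}(1,-1,\dots,(-1)^{m-1})$, which fixes the diagonal and restores the sign on the super-diagonal (the involution already used in \lemref{lem-reverser-lie-nilpotent}). For the real blocks I would use $\mathrm{J}_{\R}(\mu\pm\ib\nu,2t)=\Psi(\mathrm{J}(\mu+\ib\nu,t))$ and the fact that $\Psi$ is an algebra embedding, so $-\mathrm{J}_{\R}(\mu\pm\ib\nu,2t)=\Psi(-\mathrm{J}(\mu+\ib\nu,t))\sim\Psi(\mathrm{J}(-\mu-\ib\nu,t))=\mathrm{J}_{\R}(-\mu\mp\ib\nu,2t)$ by \eqref{eq-relation-real-jordan-block}; I would also record that $\mathrm{J}_{\R}(\alpha\pm\ib\beta,2t)\sim\mathrm{J}_{\R}(\alpha\mp\ib\beta,2t)$ (again \eqref{eq-relation-real-jordan-block}), so blocks may always be normalized to the positive-imaginary-part representative. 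Over $\H$ the delicate point, which I expect to be the main obstacle, is that eigenvalues are similarity classes: $\mathrm{J}(\lambda,m)\sim\mathrm{J}(\bar\lambda,m)$ since $\jb$ conjugates $\lambda$ to $\bar\lambda$; hence $-\mathrm{J}(\lambda,m)\sim\mathrm{J}(-\lambda,m)\sim\mathrm{J}(-\bar\lambda,m)$, and when $\lambda=a+\ib b$ with $b\ge 0$ the \emph{standard} (non-negative-imaginary-part) representative of the negated class is $-\bar\lambda=-a+\ib b$. One must be careful that these block-level similarities assemble consistently with the uniqueness clause of \lemref{lem-Jordan-M(n,D)}.

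These computations define an involution $\iota$ on the set of Jordan-block types, namely $\mathrm{J}(\lambda,m)\mapsto\mathrm{J}(-\lambda,m)$, $\mathrm{J}_{\R}(\mu\pm\ib\nu,2\ell)\mapsto\mathrm{J}_{\R}(-\mu\mp\ib\nu,2\ell)$, and over $\H$ the class map $\mathrm{J}(\lambda,m)\mapsto\mathrm{J}(-\bar\lambda,m)$. By uniqueness of the Jordan form, $X$ is $\mathrm{Ad}$-real if and only if the multiset of blocks of $X$ is $\iota$-invariant. I would then decompose this multiset into $\iota$-orbits: the fixed blocks give the admissible singletons and the two-element orbits give the admissible pairs. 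The fixed blocks are exactly those with $\iota(B)=B$: over $\C$ this forces $\lambda=0$; over $\R$ it forces $\lambda=0$ or $\mu=0$ (using $\mathrm{J}_{\R}(0\pm\ib\nu,2\ell)\sim\mathrm{J}_{\R}(0\mp\ib\nu,2\ell)$); over $\H$ it forces ${\rm Re}(\lambda)=0$. Reading off the non-fixed orbits yields the stated pairs, which establishes the ``only if'' direction in all three cases.

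For the ``if'' direction I would verify that each admissible singleton and pair is itself $\mathrm{Ad}$-real and invoke closure of $\mathrm{Ad}$-reality under direct sums (if $g_iX_ig_i^{-1}=-X_i$ then $(\oplus g_i)(\oplus X_i)(\oplus g_i)^{-1}=-\oplus X_i$). The pairs $\{\mathrm{J}(\lambda,s),\mathrm{J}(-\lambda,s)\}$ and $\{\mathrm{J}_{\R}(\mu\pm\ib\nu,2t),\mathrm{J}_{\R}(-\mu\mp\ib\nu,2t)\}$ and the singletons $\mathrm{J}(0,m)$, $\mathrm{J}_{\R}(0\pm\ib\nu,2\ell)$ are handled directly by \lemref{lem-reverser-lie-pair-C}, \lemref{lem-reverser-lie-pair-R}, \lemref{lem-reverser-lie-nilpotent} and \lemref{lem-reverser-lie-over-R-1} (after the class identifications of the previous step in the $\H$ case). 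The only situation not already covered is a single quaternionic block $\mathrm{J}(\mu,m)$ with ${\rm Re}(\mu)=0$, say $\mu=\ib\nu$; here I would take $g=(\jb\,\mathrm{I}_m)\,D$, so that $g\,\mathrm{J}(\ib\nu,m)\,g^{-1}=D\,\mathrm{J}(-\ib\nu,m)\,D^{-1}=-\mathrm{J}(\ib\nu,m)$, exhibiting $\mathrm{Ad}$-reality. Note $g^2=-\mathrm{I}_m$, so this reverser has order four and is not an involution, which is precisely why such a block is $\mathrm{Ad}$-real while its strong reality must be treated separately.
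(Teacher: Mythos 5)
Your proposal is correct and takes essentially the same route as the paper: both reduce $\mathrm{Ad}_{\mathrm{GL}(n,\D)}$-reality to the statement that $X$ and $-X$ have the same Jordan form up to permutation of blocks (the uniqueness clause of Lemma \ref{lem-Jordan-M(n,D)}), with the quaternionic pivot being exactly the fact that $[\lambda]=[-\lambda]$ if and only if ${\rm Re}(\lambda)=0$, which your $-\bar\lambda=-a+\ib b$ computation makes explicit. Your ``if'' direction via explicit reversers (the paper's Lemmas \ref{lem-reverser-lie-nilpotent}--\ref{lem-reverser-lie-pair-R} plus your order-four $g=(\jb\,\mathrm{I}_m)D$ for the block $\mathrm{J}(\ib\nu,m)$) is more than is strictly needed---equality of the block multisets of $X$ and $-X$ already yields the conjugacy abstractly, which is how the paper argues---but it is a harmless, correct strengthening that usefully anticipates the strong-reality discussion.
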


\begin{proof} 
Consider the case $\D=\R$. Using Lemma \ref{lem-Jordan-M(n,D)},  $X$ is conjugate to $-X$ if and only if 
	$$ \{ \mathrm{J}(\lambda_1 ,  m_1  ),  \dots,  \mathrm{J}(\lambda_k ,  m_k  ),\mathrm{J}_{\R}( \mu_1 \pm \ib \nu_1, \,  2 \ell_1 ), \dots, \mathrm{J}_{\R}( \mu_q \pm \ib \nu_q, \,  2 \ell_q ) \}$$
	$$\,=\, \{ \mathrm{J}(-\lambda_1 ,  m_1  ),  \dots,  \mathrm{J}(-\lambda_k ,  m_k  ),\mathrm{J}_{\R}( -\mu_1 \mp \ib \nu_1, \,  2 \ell_1 ), \dots, \mathrm{J}_{\R}(- \mu_q \mp \ib \nu_q, \,  2 \ell_q ) \},$$
and the result follows immediately for the case $\D=\R$.  
Recall the fact that for a unique complex representative $\lambda$ of an eigenvalue class of  $X$,  $[\lambda]= [-\lambda]$ if and only if the real part of $\lambda =0$. Using the same line of argument as we used in the $\D=\R$ case, the result follows for the case $\D= \C$ or $\H$.  
\end{proof}

Recall that every strongly $ {\rm Ad}_{{\rm GL }(n,\D)} $-real element in $ \mathfrak{gl}(n,\D)$ is $ {\rm Ad}_{{\rm GL }(n,\D)} $-real.  In the next result, we will prove that the converse holds when $\D = \R$ or $ \C$.
\begin{theorem} \label{thm-str-reality-gl-R-C}
Let $\mathbb{D} = \R$ or $\C$.
An element   $A$ of  $ \mathfrak{gl}(n,\D)$ is $ {\rm Ad}_{{\rm GL }(n,\D)} $-real if and only if  it is strongly $ {\rm Ad}_{{\rm GL }(n,\D)} $-real.
\end{theorem}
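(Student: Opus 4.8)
The plan is to establish the nontrivial implication, that every $\mathrm{Ad}_{\mathrm{GL}(n,\D)}$-real element is strongly $\mathrm{Ad}_{\mathrm{GL}(n,\D)}$-real; the reverse implication is immediate from the definitions, since a reversing involution is in particular a reversing element. The strategy is to reduce to the Jordan canonical form, invoke the partition furnished by Theorem \ref{thm-adjoint-real-gl(n,D)}, and then assemble a single reversing involution out of the block-wise reversing involutions already constructed in \S\ref{subsec-str-ad-reality-Jordan}.

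First I would record that strong $\mathrm{Ad}$-reality is invariant under conjugation: if $A = h A' h^{-1}$ for some $h \in \mathrm{GL}(n,\D)$ and $A'$ is reversed by an involution $g'$, then $g := h g' h^{-1}$ is again an involution (as $g^2 = h (g')^2 h^{-1} = \mathrm{I}_n$) and satisfies $g A g^{-1} = h g' A' (g')^{-1} h^{-1} = -A$. Consequently it suffices to treat $A$ already in the Jordan form of Lemma \ref{lem-Jordan-M(n,D)}; moreover, after a further conjugation by a permutation matrix, I may arrange the Jordan blocks so that each pair occurring in the partition of Theorem \ref{thm-adjoint-real-gl(n,D)} consists of two adjacent blocks.

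Now suppose $A$ is $\mathrm{Ad}_{\mathrm{GL}(n,\D)}$-real. By Theorem \ref{thm-adjoint-real-gl(n,D)}, its Jordan blocks split into groups of the following types: singleton nilpotent blocks $\mathrm{J}(0,m)$, paired blocks $\mathrm{J}(\lambda,s) \oplus \mathrm{J}(-\lambda,s)$ with $\lambda \neq 0$, and, when $\D = \R$, additionally singleton blocks $\mathrm{J}_{\R}(0 \pm \ib\nu, 2\ell)$ and paired blocks $\mathrm{J}_{\R}(\mu \pm \ib\nu, 2t) \oplus \mathrm{J}_{\R}(-\mu \mp \ib\nu, 2t)$. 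For each such group I would invoke the corresponding lemma of \S\ref{subsec-str-ad-reality-Jordan} to produce an involution $g_i$ in the relevant general linear group with $g_i X_i g_i^{-1} = -X_i$, where $X_i$ denotes that group: Lemma \ref{lem-reverser-lie-nilpotent} handles $\mathrm{J}(0,m)$, Lemma \ref{lem-reverser-lie-pair-C} handles $\mathrm{J}(\lambda,s) \oplus \mathrm{J}(-\lambda,s)$, while over $\R$ Lemma \ref{lem-reverser-lie-over-R-1} handles $\mathrm{J}_{\R}(0 \pm \ib\nu, 2\ell)$ and Lemma \ref{lem-reverser-lie-pair-R} handles $\mathrm{J}_{\R}(\mu \pm \ib\nu, 2t) \oplus \mathrm{J}_{\R}(-\mu \mp \ib\nu, 2t)$.

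Finally I would set $g := \mathrm{diag}(g_1, \dots, g_r)$, the block-diagonal matrix built from these involutions according to the chosen grouping of blocks. Then $g^2 = \mathrm{I}_n$ because each $g_i^2$ is the identity, and $g A g^{-1} = -A$ because the reversal holds block by block; hence $g$ is a reversing involution for $A$, proving that $A$ is strongly $\mathrm{Ad}_{\mathrm{GL}(n,\D)}$-real. I do not anticipate a genuine obstacle: the entire content sits in the explicit involutions of \S\ref{subsec-str-ad-reality-Jordan} together with the classification of Theorem \ref{thm-adjoint-real-gl(n,D)}. The only points requiring care are the conjugation-invariance reduction and the bookkeeping needed to place paired blocks adjacently, so that the direct-sum involutions apply verbatim; the restriction to $\D = \R, \C$ enters precisely because these are exactly the cases in which each group type admits such an involution, which is what fails in the quaternionic setting.
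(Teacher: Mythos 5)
Your proposal is correct and takes essentially the same route as the paper's proof: reduce to the Jordan form of Lemma~\ref{lem-Jordan-M(n,D)}, invoke the partition of Theorem~\ref{thm-adjoint-real-gl(n,D)}, and assemble a block-diagonal reversing involution from Lemmas~\ref{lem-reverser-lie-nilpotent}, \ref{lem-reverser-lie-pair-C}, \ref{lem-reverser-lie-over-R-1}, and \ref{lem-reverser-lie-pair-R}. You merely make explicit the steps the paper leaves implicit (conjugation-invariance of strong reality, permuting paired blocks to be adjacent, and verifying $g^2=\mathrm{I}_n$ for the assembled block matrix), which is a faithful filling-in rather than a different argument.
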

\begin{proof}  Without loss of generality,  we can assume that $A$ is  in Jordan form as in \lemref{lem-Jordan-M(n,D)}.   Suppose  $A$ is  $ {\rm Ad}_{{\rm GL }(n,\D)} $-real.  Then Jordan blocks of $A$ can be partitioned as in \thmref{thm-adjoint-real-gl(n,D)}. In view of  \lemref{lem-reverser-lie-nilpotent}, \lemref{lem-reverser-lie-pair-C},   \lemref{lem-reverser-lie-over-R-1},   and \lemref{lem-reverser-lie-pair-R},  we can explicitly construct an involution  $ g$ in  $ {\rm GL }(n,\D)$   such that $gAg^{-1}=-A$.  Since the converse is trivial, this completes the proof.
\end{proof}

The following example shows that the above result does not hold for $\mathfrak{gl}(n,\H)$.

\begin{example}\label{ex-real-not str real-gl(1,H)}
Let $A := (\ib) \in  \mathfrak{gl}(1,\H)$.  Then $gAg^{-1}=- A$,  where $g= (\jb) \in \mathrm{GL}(1,\H)$.  So $A$ is $ {\rm Ad}_{{\rm GL }(1,\H)} $-real.  Suppose that $A$ is strongly $ {\rm Ad}_{{\rm GL }(1,\H)} $-real.  Let $g = (a) \in {\rm GL }(1,\H)$ be an involution such that  $gAg^{-1}= -A$.  Then we get $ a\ib = -\ib a$.  This implies $a = w\jb $ for some $ w \in \C,  \, w \neq 0$. Since $g$ is an involution,  $a^2 = (w \jb)^2 =1$, 
i.e,  $|w|^2 = -1$. This is a contradiction. Therefore,  $A$ is $ {\rm Ad}_{{\rm GL }(1,\H)} $-real  but not strongly $ {\rm Ad}_{{\rm GL }(1,\H)} $-real.  \qed
\end{example}

The next result gives a sufficient criterion for the $ {\rm Ad}_{{\rm GL }(n,\H)} $-real elements in $ \mathfrak{gl}(n,\H)$ to be strongly  $ {\rm Ad}_{{\rm GL }(n,\H)} $-real.   
\begin{theorem}\label{thm-strong-real-gl(n,H)}
Let $A  \in \mathfrak{gl}(n,\H)$ be an arbitrary $ {\rm Ad}_{{\rm GL }(n,\H)} $-real element.   Suppose that in the Jordan decomposition (\ref{equ-Jordan-M(n,H)}) of $A$,  every Jordan block corresponding to a non-zero eigenvalue class with purely imaginary  complex representative has even multiplicity. Then $A$ is strongly $ {\rm Ad}_{{\rm GL }(n,\H)} $-real. 
\end{theorem}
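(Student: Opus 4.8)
The strategy is to reduce the problem to the building blocks for which strong $\mathrm{Ad}$-reality has already been established in Section \ref{subsec-str-ad-reality-Jordan}, and then to assemble a single reversing involution from the block-wise involutions. First I would put $A$ in the Jordan form \eqref{equ-Jordan-M(n,H)} of \lemref{lem-Jordan-M(n,D)}; since strong $\mathrm{Ad}$-reality is preserved under conjugation in $\mathrm{GL}(n,\H)$, it suffices to treat the Jordan form itself. Because $A$ is assumed $\mathrm{Ad}_{\mathrm{GL}(n,\H)}$-real, \thmref{thm-adjoint-real-gl(n,D)}\eqref{part-adjoint-real-H} tells us exactly how the blocks partition: into pairs $\{\mathrm{J}(\lambda,s),\mathrm{J}(-\lambda,s)\}$ with $\mathrm{Re}(\lambda)\neq 0$, and into singleton blocks $\mathrm{J}(\mu,m)$ whose complex representative $\mu$ is purely imaginary (including $\mu=0$). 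The plan is to exhibit a reversing involution on each piece of this partition and then take the direct sum.

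The two families of pieces are handled by the lemmas already in hand. For a paired block $\mathrm{J}(\lambda,s)\oplus\mathrm{J}(-\lambda,s)$ with $\mathrm{Re}(\lambda)\neq 0$, \lemref{lem-reverser-lie-pair-C} (the $\D=\H$ case) produces an explicit involution $g$ with $gXg^{-1}=-X$. For the singleton blocks with purely imaginary representative, this is where the even-multiplicity hypothesis becomes essential: a single block $\mathrm{J}(\mu\ib,m)$ is $\mathrm{Ad}$-real but need \emph{not} be strongly $\mathrm{Ad}$-real, exactly as \exref{ex-real-not str real-gl(1,H)} shows for $m=1$. The hypothesis guarantees these purely-imaginary blocks come in equal-sized pairs $\mathrm{J}(\mu\ib,m)\oplus\mathrm{J}(\mu\ib,m)$, and \lemref{lem-reverser-lie-pair-H} supplies a reversing involution (using the $\jb$-twisted diagonal $\tau$ with $\tau^2=-\mathrm{I}_m$) for precisely such a pair. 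The purely nilpotent blocks $\mathrm{J}(0,m)$, if present, are covered directly by \lemref{lem-reverser-lie-nilpotent}.

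Having chosen an involution $g_i$ with $g_i X_i g_i^{-1}=-X_i$ and $g_i^2=\mathrm{I}$ on each block $X_i$ of the partition, I would set $g:=\bigoplus_i g_i$. Then $g^2=\mathrm{I}$ and $gAg^{-1}=-A$ block-by-block, so $g$ is an involution in $\mathrm{GL}(n,\H)$ reversing $A$, which is exactly the assertion that $A$ is strongly $\mathrm{Ad}_{\mathrm{GL}(n,\H)}$-real. The only real obstacle is conceptual rather than computational: one must recognize that the failure of the naive converse to \thmref{thm-str-reality-gl-R-C} over $\H$ is localized entirely in the odd-multiplicity purely-imaginary blocks (the obstruction $|w|^2=-1$ of \exref{ex-real-not str real-gl(1,H)}), and that the even-multiplicity hypothesis is exactly what lets \lemref{lem-reverser-lie-pair-H} bypass that obstruction by pairing such blocks. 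Everything else is a direct-sum bookkeeping argument, so no delicate estimates are needed.
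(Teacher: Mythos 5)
Your proposal is correct and follows essentially the same route as the paper: the paper's proof likewise invokes \thmref{thm-adjoint-real-gl(n,D)} to obtain the block partition and then assembles the reversing involution from \lemref{lem-reverser-lie-nilpotent}, \lemref{lem-reverser-lie-pair-C}, and \lemref{lem-reverser-lie-pair-H}, with the even-multiplicity hypothesis used exactly as you say, to pair the purely imaginary blocks. Your write-up merely makes explicit the direct-sum bookkeeping that the paper leaves implicit.
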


\begin{proof}
In view of   \thmref{thm-adjoint-real-gl(n,D)},  the proof  follows from   \lemref{lem-reverser-lie-nilpotent}, \lemref{lem-reverser-lie-pair-C} and \lemref{lem-reverser-lie-pair-H}.
\end{proof}

\section{Reversibility in  ${\rm GL }(n,\D)$}\label{sec-GL(n,D)-5}

As before,  $\mathbb{D} := \R, \C $ or $\H$.   Here,  we will consider reversibility in the Lie group $ {\rm GL}(n,\D) $. 
The classification of reversible elements in ${\rm GL }(n,\C)$ is given in \cite[Theorem 4.2]{ FS}.  We have included the case $ \D=\C $ here as it will be used in Proposition \ref{Prop-str-rev-GL-R-C}.

\begin{theorem}\label{thm-reversible-GL(n,D)} 
Let $\mathbb{D} := \R, \C $ or $\H$.  An element $A \in  {\rm GL }(n,\D)$ with Jordan form  as given in Lemma \ref{lem-Jordan-M(n,D)} is reversible  if and only if  the following hold: 
\begin{enumerate}
\medskip \item \label{part-reversible-GL(n,R)}
For $ \D=\R $, the blocks can be partitioned into pairs   $\{ \mathrm{J}_{\R} (\mu \pm \ib \nu, \,  2 t ),  \mathrm{J}_{\R}( \frac{\mu}{\mu^2 + \nu^2} \,  \mp \,  \ib \frac{\nu}{\mu^2 + \nu^2}, \,  2 t ) \} $, $ \{ \mathrm{J}(\lambda, s),\mathrm{J}(\lambda^{-1}, s)\} $ or, singletons  $ \{\mathrm{J}_{\R}(\alpha \pm \ib \beta, \,  2 \ell ) \}$, $\{\mathrm{J}(\gamma, m  )\}$,  where $\lambda, \mu,\nu \in \R $ such that $ \lambda, \gamma \neq 0,  \nu, \beta > 0$ and $ \lambda \neq \pm1$,  $\mu^2 + \nu^2 \neq  1, \gamma = \pm 1,\,   \alpha^2 + \beta^2 = 1$. 
		
\medskip \item   \label{part-reversible-GL(n,C)}
For $ \D=\C $, the blocks can be partitioned into   pairs $ \{ \mathrm{J}(\lambda, s),\mathrm{J}(\lambda^{-1}, s)\} $ or, singletons $\{\mathrm{J}(\mu, m  )\}$,  where $\lambda, \mu \in \C \setminus \{0\} $ and $ \lambda \neq \pm 1,  \mu = \pm 1  $.
	
\medskip \item   \label{part-reversible-GL(n,H)} For $ \D=\H $, the blocks can be partitioned into  pairs $ \{ \mathrm{J}(\lambda, s),\mathrm{J}(\lambda^{-1}, s)\} $ or, singletons $\{\mathrm{J}(\mu, m  )\}$,  where $\lambda, \mu \in \C \setminus \{0\}$  with non-negative imaginary parts  such that  $|\lambda| \neq 1,  |\mu| = 1$.  
\end{enumerate}
\end{theorem}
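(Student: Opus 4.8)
The plan is to convert reversibility into an equality of Jordan forms and then read off the admissible partitions. By \lemref{lem-Jordan-M(n,D)}, two elements of $ {\rm GL }(n,\D) $ are conjugate exactly when their Jordan forms agree up to a permutation of blocks. Consequently $A$ is reversible, i.e.\ conjugate to $A^{-1}$, if and only if the multiset of Jordan blocks of $A$ coincides with the multiset of Jordan blocks of $A^{-1}$. The entire problem therefore reduces to computing the Jordan form of $A^{-1}$ from that of $A$ and deciding when the two multisets match.

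First I would record the inversion rule for a single Jordan block. Writing $ {\rm J}(\lambda,m) = \lambda {\rm I}_m + {\rm J}(0,m) $ with $\lambda \neq 0$ and expanding $ ({\rm J}(\lambda,m))^{-1} $ as a finite Neumann series in the nilpotent part, one sees that $ ({\rm J}(\lambda,m))^{-1} - \lambda^{-1}{\rm I}_m $ is nilpotent of rank $m-1$; hence $ ({\rm J}(\lambda,m))^{-1} $ is a single Jordan block similar to $ {\rm J}(\lambda^{-1},m) $. For a real rotational block I would apply the ring homomorphism $\Psi$ of \eqref{eq-embedding-psi} to this identity: since $ {\rm J}_{\R}(\mu \pm \ib\nu, 2\ell) = \Psi({\rm J}(\mu+\ib\nu,\ell)) $ and $ (\mu+\ib\nu)^{-1} = \tfrac{\mu}{\mu^2+\nu^2} - \ib\tfrac{\nu}{\mu^2+\nu^2} $, the inverse block is similar to $ {\rm J}_{\R}\big(\tfrac{\mu}{\mu^2+\nu^2} \mp \ib \tfrac{\nu}{\mu^2+\nu^2}, 2\ell\big) $. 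Over $\H$, where a block is recorded by the complex representative with non-negative imaginary part, I would further replace $\lambda^{-1}$ by its conjugate $ \overline{\lambda^{-1}} = \lambda/|\lambda|^2 $, which lies in the same similarity class.

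With these rules, the inversion map acts as an involution on the blocks of $A$, preserving sizes and inverting eigenvalues, so invariance of the multiset forces every block either to be fixed (yielding a singleton) or to be matched with a distinct inverse block of equal multiplicity (yielding a pair). Identifying the fixed blocks is then a brief case analysis: $ {\rm J}(\gamma,m) $ over $\R$ is self-inverse iff $\gamma = \pm 1$; $ {\rm J}_{\R}(\alpha \pm \ib\beta, 2\ell) $ is self-inverse iff $\alpha^2 + \beta^2 = 1$; over $\C$ a block $ {\rm J}(\mu,m) $ is self-inverse iff $\mu = \pm 1$; and over $\H$ a block $ {\rm J}(\mu,m) $ is self-inverse iff $[\mu]=[\mu^{-1}]$, which holds precisely when $|\mu| = 1$. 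Collecting the resulting fixed-point and pairing data gives exactly the partitions stated for $\D=\R,\C$ and $\H$, and since the equality of Jordan forms is symmetric in $A$ and $A^{-1}$, both implications are obtained at once. (The forward direction may alternatively be read off from the explicit involutions constructed in \S\ref{subsec-reverser-GL-nD}, which even give strong reversibility block by block.)

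The only delicate point is the bookkeeping of representatives. Over $\H$ an eigenvalue is a similarity class, so before comparing multisets the complex inverse $\lambda^{-1}$ must be normalized to its non-negative-imaginary-part representative $\lambda/|\lambda|^2$; this is exactly why the self-inverse criterion relaxes from $\mu = \pm 1$ over $\C$ to $|\mu|=1$ over $\H$. Over $\R$ one must likewise remember that $ {\rm J}_{\R}(\,\cdot \pm \ib\,\cdot\,) $ and $ {\rm J}_{\R}(\,\cdot \mp \ib\,\cdot\,) $ denote the same real similarity class, being conjugate by the matrix $\sigma$ of \eqref{eq-relation-real-jordan-block}, so that the inverse of a $\pm$-block may legitimately be listed as a $\mp$-block. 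Keeping these conventions consistent throughout the inversion step is the main obstacle; everything else is routine Jordan-form manipulation.
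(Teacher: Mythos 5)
Your proposal is correct and takes essentially the same route as the paper's proof: both reduce reversibility to the statement that, by the uniqueness in \lemref{lem-Jordan-M(n,D)}, $A$ is conjugate to $A^{-1}$ exactly when their Jordan-block multisets coincide, and then read off the admissible pairings and self-inverse blocks. You merely make explicit the block-inversion rule (Neumann series, the embedding $\Psi$ for rotational blocks, and the normalization of quaternionic representatives) that the paper leaves implicit by deferring to the argument of \thmref{thm-adjoint-real-gl(n,D)} and to \cite[Theorem 4.2]{FS} for the complex case.
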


\begin{proof} 	
In the case of $\D =\H$,   for a unique complex representative $\lambda \in \C$ of an eigenvalue class of  $A$,  $[\lambda]= [\lambda^{-1}]$ if and only if $|\lambda|= 1$, i.e.,  $\lambda^{-1} = \overline{\lambda}$. 
Using  Lemma \ref{lem-Jordan-M(n,D)},  $A$ is conjugate to $A^{-1}$ if and only if  $A$ and $A^{-1}$ has same Jordan decomposition.  
Now the proof of $ (\ref{part-reversible-GL(n,R)}) $ and $ (\ref{part-reversible-GL(n,H)}) $ follows from the same line of arguments as done in 
	\thmref{thm-adjoint-real-gl(n,D)}.
For the proof of $ (\ref{part-reversible-GL(n,C)}) $,  we refer to \cite[Theorem 4.2]{ FS}. 
\end{proof}

\medskip 

In ${\rm GL }(n,\D) $,  every reversible element is strongly reversible for the case $\mathbb{D} = \R$  or $\C$.
\begin{proposition} [cf.~{\cite[ Theorems 4.7]{FS}}]  \label{Prop-str-rev-GL-R-C}
Let $A \in {\rm GL }(n,\D) $,  where $\mathbb{D} = \R$ or $ \C$.  Then $A$ is reversible  in   $ {\rm GL }(n,\D) $ if and only if $A$ is strongly reversible in   $ {\rm GL }(n,\D) $.
\end{proposition}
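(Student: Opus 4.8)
The plan is to reduce the statement to the block-level results already established in Section \ref{sec-reality-Jordan-3} and then to assemble a single reversing involution block by block. The forward implication is immediate from the definitions, since an element conjugate to its inverse by an involution is in particular conjugate to its inverse; so I would concentrate on the converse. Because both reversibility and strong reversibility are invariant under conjugation in ${\rm GL}(n,\D)$, I would first assume without loss of generality that $A$ is in the Jordan form of Lemma \ref{lem-Jordan-M(n,D)}.

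With $A$ reversible and in Jordan form, I would invoke Theorem \ref{thm-reversible-GL(n,D)} to partition the Jordan blocks of $A$ into the pairs and singletons listed there. After a permutation of blocks (which is again a conjugation), I can write $A = A_1 \oplus \cdots \oplus A_r$, where each summand $A_i$ is one of the four admissible configurations: a singleton $\mathrm{J}(\gamma, m)$ with $\gamma = \pm 1$, a pair $\mathrm{J}(\lambda, s) \oplus \mathrm{J}(\lambda^{-1}, s)$ with $\lambda \neq 0, \pm 1$, and, in the case $\D = \R$, a singleton $\mathrm{J}_{\R}(\alpha \pm \ib \beta, 2\ell)$ with $\alpha^2 + \beta^2 = 1$ or a pair $\mathrm{J}_{\R}(\mu \pm \ib \nu, 2t) \oplus \mathrm{J}_{\R}(\tfrac{\mu}{\mu^2+\nu^2} \mp \ib \tfrac{\nu}{\mu^2+\nu^2}, 2t)$ with $\mu^2 + \nu^2 \neq 1$.

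The key observation is that these four configurations are precisely the ones shown to be strongly reversible in Lemmas \ref{lem-rev-jodan-type-1overC}, \ref{lem-rev-jodan-type-2 over C}, \ref{lem-rev-jodan-type-1 over R}, and \ref{lem-rev-jodan-type-2 over R}. Hence for each $i$ I would extract from the relevant lemma an involution $g_i$ with $g_i A_i g_i^{-1} = A_i^{-1}$, and then set $g := g_1 \oplus \cdots \oplus g_r$. Since each $g_i^2$ is the identity and conjugation respects the block-diagonal decomposition, $g$ is an involution satisfying $g A g^{-1} = A^{-1}$, which gives strong reversibility of $A$.

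I do not anticipate a genuine mathematical obstacle: all the real content, namely the explicit construction of a reversing involution for each irreducible configuration, has already been carried out in the lemmas of Section \ref{sec-reality-Jordan-3}, and the present argument is little more than a direct-sum assembly. The one point I would check with care is the exact matching between the block types in Theorem \ref{thm-reversible-GL(n,D)} and the hypotheses of the four lemmas, making sure every reversible configuration is covered and that the side conditions (such as $\lambda \neq \pm 1$ and $\mu^2 + \nu^2 \neq 1$) line up so that each lemma genuinely applies. This case-by-case correspondence is where a careless reading could go wrong, so I would tabulate it explicitly before concluding.
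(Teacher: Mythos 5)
Your proposal is correct and is essentially the paper's own proof: the paper likewise reduces $A$ to Jordan form, invokes Theorem \ref{thm-reversible-GL(n,D)} to partition the blocks into the admissible pairs and singletons, and then applies Lemmas \ref{lem-rev-jodan-type-1overC}, \ref{lem-rev-jodan-type-2 over C}, \ref{lem-rev-jodan-type-1 over R}, and \ref{lem-rev-jodan-type-2 over R} to obtain a reversing involution blockwise. Your write-up simply spells out the direct-sum assembly and the matching of side conditions that the paper leaves implicit.
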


\begin{proof}
In view of  \thmref{thm-reversible-GL(n,D)}, the proof  follows from 
 \lemref{lem-rev-jodan-type-1overC},  \lemref{lem-rev-jodan-type-2 over C}, \lemref{lem-rev-jodan-type-1 over R},  and  \lemref{lem-rev-jodan-type-2 over R}. 
\end{proof}

The next example shows that \propref{Prop-str-rev-GL-R-C} does not hold in the case $\D =\H$.

\begin{example}\label{ex-real-not str real-gl(1,H)}
Let $A: = (\ib) \in {\rm GL }(1,\H)$. Then $A$ is reversible in ${\rm GL }(n,\H)$   but not strongly reversible.
\qed \end{example}

The next result gives a sufficient criterion for the reversible elements in $ {\rm GL }(n,\H)$ to be strongly reversible.
\begin{theorem}\label{thm-strong-rev-GL(n,H)}
Let $A \in {\rm GL }(n,\H)$ be an arbitrary reversible element.  Suppose that in the Jordan decomposition of $A$,  every Jordan block corresponding to non-real eigenvalue classes of unit modulus has even multiplicity.  Then $A$ is strongly reversible in ${\rm GL }(n,\H)$. 
\end{theorem}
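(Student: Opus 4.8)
The plan is to reduce the strong reversibility of $A$ to the strong reversibility of the individual Jordan blocks appearing in its decomposition, and then invoke the block-level lemmas already established in \S \ref{subsec-reverser-GL-nD}. First I would put $A$ into its Jordan form \eqref{equ-Jordan-M(n,H)} via conjugation in $\mathrm{GL}(n,\H)$; since strong reversibility is a conjugacy-invariant notion, it suffices to treat the Jordan form directly. By \thmref{thm-reversible-GL(n,D)}\eqref{part-reversible-GL(n,H)}, reversibility tells us the blocks of $A$ partition into pairs $\{\mathrm{J}(\lambda,s),\mathrm{J}(\lambda^{-1},s)\}$ with $|\lambda|\neq 1$ and singleton blocks $\mathrm{J}(\mu,m)$ with $|\mu|=1$. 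The strategy is to exhibit, for each piece of this partition, a reversing involution, and then assemble these block-diagonal involutions into a single involution $\tau\in\mathrm{GL}(n,\H)$ with $\tau A\tau^{-1}=A^{-1}$.

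The decomposition splits into three kinds of summands, each handled by an existing lemma. For a paired block $\mathrm{J}(\lambda,s)\oplus\mathrm{J}(\lambda^{-1},s)$ with $|\lambda|\neq 1$, strong reversibility is exactly \lemref{lem-rev-jodan-type-2 over C}. For a singleton $\mathrm{J}(\mu,m)$ with $\mu=\pm 1$, it is \lemref{lem-rev-jodan-type-1overC}. The remaining singletons are blocks $\mathrm{J}(\mu,m)$ with $|\mu|=1$ but $\mu\neq\pm 1$, i.e.\ a genuinely non-real eigenvalue class of unit modulus; these are precisely the blocks for which the field-case argument breaks down, as Example \ref{ex-real-not str real-gl(1,H)} illustrates. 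Here is where the hypothesis enters: by assumption such blocks occur with even multiplicity, so they can be grouped into pairs $\mathrm{J}(\mu,m)\oplus\mathrm{J}(\mu,m)$, and each such pair is strongly reversible by \lemref{lem-rev-jodan-unit-modulus- H}.

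Having a reversing involution $\tau_i$ for each summand, I would set $\tau:=\bigoplus_i \tau_i$, a block-diagonal matrix in $\mathrm{GL}(n,\H)$. Since each $\tau_i$ is an involution satisfying $\tau_i A_i \tau_i^{-1}=A_i^{-1}$ on its block $A_i$, the direct sum $\tau$ is an involution with $\tau A\tau^{-1}=A^{-1}$, which is the definition of strong reversibility. Transporting back through the initial conjugation preserves both the involution property and the reversing relation, completing the argument.

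The main obstacle is the non-real unit-modulus blocks: over $\R$ or $\C$ a single such block is already strongly reversible, but over $\H$ Example \ref{ex-real-not str real-gl(1,H)} shows a lone block $\mathrm{J}(\ib,m)$ is reversible yet \emph{not} strongly reversible, so no block-diagonal involution exists for it in isolation. The even-multiplicity hypothesis is exactly what is needed to circumvent this, allowing two copies to be reversed together via the quaternionic involution built from $\jb$ in \lemref{lem-rev-jodan-unit-modulus- H} (using $\jb Z=\overline{Z}\jb$ to turn $\mathrm{J}(e^{\ib\theta},m)$ into $\mathrm{J}(e^{-\ib\theta},m)$). The only point requiring care is to verify that the reversibility data of \thmref{thm-reversible-GL(n,D)}\eqref{part-reversible-GL(n,H)} together with the hypothesis genuinely forces every summand into one of these three strongly reversible shapes, leaving no block unaccounted for.
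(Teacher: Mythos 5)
Your proposal is correct and takes essentially the same route as the paper: the paper's proof likewise reduces to the block partition of \thmref{thm-reversible-GL(n,D)}\eqref{part-reversible-GL(n,H)} and invokes \lemref{lem-rev-jodan-type-1overC}, \lemref{lem-rev-jodan-type-2 over C}, and \lemref{lem-rev-jodan-unit-modulus- H}, using the even-multiplicity hypothesis to pair the non-real unit-modulus blocks. You have merely spelled out the assembly of the block-diagonal reversing involution, which the paper leaves implicit.
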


\begin{proof}
In view of  \thmref{thm-reversible-GL(n,D)}, the proof  follows from     \lemref{lem-rev-jodan-type-1overC}, \lemref{lem-rev-jodan-type-2 over C} and \lemref{lem-rev-jodan-unit-modulus- H}.
\end{proof}

\end{document}